\newtheorem{lemma}{Lemma}
\newtheorem{theorem}{Theorem}
\newtheorem{definition}{Definition}
\newlength\figureheight
\newlength\figurewidth
\colorlet{tableheadcolor}{gray!25} 
\colorlet{tablerowcolor}{gray!10} 
\renewcommand*\env@matrix[1][*\c@MaxMatrixCols c]{%
	\hskip -\arraycolsep
	\let\@ifnextchar\new@ifnextchar
	\array{#1}}
\def\keywords{\xdef\@thefnmark{}}
\def\@mathmeasure#1#2#3{\setbox#1\hbox{%
		\m@th$#2#3$}}
\title{A Lanczos-type procedure for tensors}
\author{S. Cipolla\thanks{School of Mathematics, The University of Edinburgh, 
		Peter Guthrie Tait Road, EH9 3FD,  Edinburgh, UK 
		({\tt scipolla@ed.ac.uk})}, S. Pozza\thanks{Faculty of Mathematics and Physics, Charles University, Sokolovsk\'a 83, 186 75 Praha 8, Czech Republic ({\tt pozza@karlin.mff.cuni.cz})}, M. Redivo-Zaglia\thanks{ Department of Mathematics ``Tullio Levi-Civita", University of Padova, Via Trieste 63,
		35121, Padova - Italy ({\tt michela@math.unipd.it})    }, N. Van Buggenhout\thanks{Faculty of Mathematics and Physics, Charles University, Sokolovsk\'a 83, 186 75 Praha 8 ({\tt buggenhout@karlin.mff.cuni.cz})}}
\begin{document}
	\maketitle
	\begin{abstract}
		The solution of linear non-autonomous ordinary differential equation systems (also known as the time-ordered exponential) is a computationally challenging problem arising in a variety of applications. In this work, we present and study a new framework for the computation of bilinear forms involving the time-ordered exponential. Such a framework is based on an extension of the non-Hermitian Lanczos algorithm to 4-mode tensors. Detailed results concerning its theoretical properties are presented. Moreover, computational results performed on real world problems confirm the effectiveness of our approach.
	\end{abstract}

{ \footnotesize
	\noindent \keywords{\textbf{Keywords}: non-Hermitian Lanczos algorithm, $\star$-Lanczos algorithm, Lanczos-type procedures for tensors, time-ordered exponential} \\
}
	
	\section{Introduction}\label{sec:intro}
	In this paper, we present an extension of the non-Hermitian Lanczos algorithm (see, e.g., \cite{GolMeuBook10}) where the inputs are $4$-mode tensors $\boldsymbol{\mathcal{A}} \in \mathbb{C}^{N \times N \times M \times M}$ and vectors $\mathbf{w}, \mathbf{v} \in \mathbb{C}^{N}$ so that $\mathbf{w}^H \mathbf{v} \neq 0$.
	We aim to use the introduced algorithm to approximate the
		bilinear form $\mathbf{w}^H \boldsymbol{\mathsf{U}}(t) \mathbf{v}$, where $\boldsymbol{\mathsf{U}}(t) \in \mathbb{C}^{N \times N}$ is the so-called 
		time-ordered exponential, i.e., the solution of the ordinary differential equation
	\begin{equation}\label{eq:ODE}
		\frac{d}{dt}\boldsymbol{\mathsf{U}}(t) = \boldsymbol{\mathsf{A}}(t) \boldsymbol{\mathsf{U}}(t), \quad \boldsymbol{\mathsf{U}}({a})=I_N, \quad t \in I = [{a},b],
	\end{equation}
	where {$I_N\in\mathbb{R}^{N\times N}$ is the identity matrix and} $\boldsymbol{\mathsf{A}}(t) \in \mathbb{C}^{N \times N}$ is a smooth matrix-valued function defined on the real interval $I$.
     Equation \eqref{eq:ODE} can emerge in a variety of {applications}. For example, its solution is crucial in quantum physics where the matrix $\boldsymbol{\mathsf{A}}(t)$ corresponds to the Hamiltonian operator. Situations where $\boldsymbol{\mathsf{U}}(t)$ has no accessible expression are frequent in literature, see, e.g., \cite{Blanes2009,Autler1955,Shirley1965,Lauder1986}. 
	For instance, in Nuclear Magnetic Resonance (NMR) experiments, the associated bilinear form $\mathbf{w}^H \boldsymbol{\mathsf{U}}(t) \mathbf{v}$ represents the measurement of changes in an applied magnetic field caused by nuclear spins that are excited with electromagnetic waves, i.e, spectroscopy \cite{Le08,HaSp98}.
	Other applications are found in control theory, filter design, and model reduction problems \cite{Reid63,kwaSiv72,Corless2003,Blanes15,BenEtAll17}.
	In the mentioned applications, the matrix $\boldsymbol{\mathsf{A}}(t)$ is often large-to-huge and sparse. 
	The introduced algorithm is motivated and theoretically supported by a new expression for the bilinear form. This expression is given by combining the two symbolic methods known as Path-sum and $\star$-Lanczos algorithm \cite{Giscard2015, GisPozInv19,GisPoz21,GisPoz22}.
	Given the matrix-valued function $\boldsymbol{\mathsf{A}}(t)$ and the vectors $\mathbf{w}, \mathbf{v}$, the two symbolic methods produce an expression for the bilinear form $\mathbf{w}^H \boldsymbol{\mathsf{U}}(t) \mathbf{v}$ composed of a finite and treatable number of integrals and scalar integral equations. 
	To our knowledge, no other symbolic method can express the bilinear form with a treatable finite number of integral subproblems. Two commonly used alternative expressions are given by the Magnus series, i.e., an infinite series of nested integrals (e.g., \cite{Magnus1954}), {and} by the Floquet theory, where the solution of an infinite system of coupled linear differential equation is required (e.g., \cite{Blanes2009}).
	
	The integrals and the integral equations generated by the $\star$-Lanczos and Path-sum methods do not always have an easily accessible solution. As a consequence, a numerical approach is needed. A possible strategy for the numerical approximation of the mentioned integrals and the integral equations is outlined in \cite{GisPoz22} and it is based on the discretization of the {interval} $I$ into $M-1$ equispaced subintervals. The algebraic objects resulting from the discretization strategy are a $4$-mode tensor $\boldsymbol{\mathcal{A}}$ (corresponding to $\boldsymbol{\mathbf{A}}(t)$) and  the $3$-mode tensors $V, W$  (corresponding to $\mathbf{v}, \mathbf{w}$).
	
	The outputs obtained by combining the $\star$-Lanczos algorithm with the mentioned discretization strategy are mathematically equivalent to the outputs of the tensor Lanczos algorithm presented here with, as inputs, $\boldsymbol{\mathcal{A}}, \mathbf{v}, \mathbf{w}$.
	The main goal of {this paper} is to show that, in fact, the tensor Lanczos algorithm can converge to the outcome of the $\star$-Lanczos method within an {accuracy of the same order as the discretization strategy}. {Moreover}, the reported numerical experiments will show that the approximation of $\mathbf{w}^H \boldsymbol{\mathsf{U}}(t) \mathbf{v}$ obtained by combining the tensor Lanczos with the discretized Path-sum approach also converges to the solution within {the order of the discretization}.
	Naturally, many numerical methods for the solution of non-autonomous ODEs can be found in literature, see, for instance, \cite{HocLub99,BudAl99,IseAl2000,Ise02,Ise04,DegSch06,Cohetal2006,Blanes2009,BadEtAl16,Bla17}.  For large matrices, these numerical methods are known to be highly demanding both in terms of computational cost and storage. This motivates the research of novel approaches suitable for large-scale problems.
	In order to be competitive with the most advanced techinques, tensor Lanczos needs to be used in combination with more accurate discretization schemes. Development of suitable, faster converging discretization schemes is an ongoing research and out of the scope of this work. At the same time, it is important to note that the algorithm here proposed is part of a wider class of tensor extensions of Krylov subspace methods that recently appeared in the literature, see e.g., \cite{Huang2019,Guide2021,Feng2020,Reichel2021}.

	The Lanczos-type process we introduce can also be equivalently written as a block Lanczos method since the $4$-mode tensor $\boldsymbol{\mathcal{A}}$ can be seen as a block matrix; information about block Krylov subspace methods can be found, e.g., in \cite{etnavol47pp100126}. Despite this fact, we prefer to interpret such a block structure in a \textit{tensorial}  fashion. Indeed, the tensorial approach has a direct translation in terms of a discretized $\star$-Lanczos algorithm. Moreover, as we will experimentally show,
	this interpretation is motivated by observing that several tensors from real world examples related with \eqref{eq:ODE} are characterized by a low parametric approximation known as  \emph{Tensor Train} decomposition (TT) \cite{MR2837533,MR2566459}. 
	Such a low-parametric approximation allows to efficiently manipulate and store the tensors. This paves the path for {further} improvements of our proposal, where the TT structure is fully exploited in the Lanczos-type procedure.
	Examples of tensor Krylov subspace methods combined with the TT decomposition can be found in \cite{Feng2020,Ruymbeek2022},
	further motivating  our \textit{tensor-based} point of view.
	
	More in detail, this work is organized as follows.
	Preliminaries and definitions of tensor operations are given in Section \ref{sec:pre}. In Section \ref{sec:lanczos} we discuss how to construct the non-Hermitian Lanczos {procedure} for tensors and we prove several crucial properties. In Section \ref{sec:break} we  discuss the breakdown issue which typically arises when working with non-Hermitian Lanczos approaches. Numerical experiments are presented in Section \ref{sec:ex} where we also give several examples exposing the low-rank TT structure of {the considered} tensors $\boldsymbol{\mathcal{A}}$. Section \ref{sec:conclusions} concludes the paper and Appendix \ref{appendix} contains several proofs.

	\section{Preliminaries}\label{sec:pre}
	In this work, we use a notation borrowed from  Matlab\textsuperscript{\textregistered}.
		Fixing  $i_1\in \{1,\dots,N_1\}$ and $i_2\in \{1,\dots,N_2\}$, if $\boldsymbol{\mathcal{A}} \in \mathbb{C}^{N_1 \times N_2 \times M \times M }$, then $\boldsymbol{\mathcal{A}}_{i_1,i_2,:,:}$ stands for the matrix 
		\begin{equation*}
			\boldsymbol{\mathcal{A}}_{i_1,i_2,:,:}:=\left[\boldsymbol{\mathcal{A}}_{i_1,i_2,j_1,j_2}\right]_{j_1,j_2= 1}^{M}.
		\end{equation*}
		This notation similarly applies to $3$-mode tensors, matrices, and vectors.
		Table~\ref{tab:notation_summary} summarizes the notation used in the paper.
	\begin{table}[ht!]
		\centering
		\begin{tabular}{ll}
			\hline
			Symbol &  Description  \\
			\hline
			$\boldsymbol{\mathsf{A}}(t), \boldsymbol{\mathsf{U}}(t)$ & Matrix-valued functions \\
			$\boldsymbol{\mathcal{A}},\boldsymbol{\mathcal{B}}...$ &        $4$-mode tensors   \\
			${A},{B}...$ &                     $3$-mode tensors \\
			$\boldsymbol{\alpha},\boldsymbol{\beta}...$  & $M \times M$ Matrices\\
			$I_M$ & $M \times M$ identity matrix \\
			$\mathbf{a},\mathbf{b}$&    Vectors \\
			$\boldsymbol{\mathcal{A}}^{k_*}$ & $k$-th $*$-power of $\boldsymbol{\mathcal{A}}$\\ 
			$\boldsymbol{\mathcal{I}}_{*}$ & $*$-identity\\
			\hline
		\end{tabular}
		\caption{Summary of notation.} \label{tab:notation_summary}
	\end{table}
		
	In the following, we define several tensorial operations, which can be seen as generalizations of the usual products involving matrices and vectors. We summarize them in Table \ref{table:prod}.
	\begin{table}[ht!]\label{table:prod}
		\begin{tabular}{clccl}
			\hline
			Symbol & Name &  $n, m$ & $k$ & Generalizing  \\
			\hline
			$\ast$ & $\ast$-Tensor product & $4, 4$ & $4$ & Matrix-matrix product \\
			$\ast$ & Tensor-Hypervector product & $4, 3$ & $3$ & Matrix-vector product \\
			$\ast$ & Hypervector inner-product & $3, 3$ & $2$ & Inner-product \\
			$\times$ & Tensor-matrix product & $4, 2$ & $4$ & Matrix-scalar product \\
			$\times$ & Hypervector-matrix product & $3, 2$ & $3$ & Vector-scalar product \\
			$\otimes$ & Vector-to-Hypervector product & $1, 2$ & $3$ & Kronecker product    \\
			\hline
		\end{tabular}
		\caption{Each of the considered \textit{products} involves two tensors with $n$ and $m$ modes {and it gives as an outcome a} tensor with $k$-modes.}
		\label{table:ops}
	\end{table}	
	In the following definitions we consider the tensors $\boldsymbol{\mathcal{A}} \in \mathbb{C}^{N_1\times N_2 \times M \times M },\boldsymbol{\mathcal{B}} \in \mathbb{C}^{N_2 \times N_3 \times M \times M }$,  $A \in \mathbb{C}^{N_2 \times M \times M }$, $B \in \mathbb{C}^{N_1 \times M \times M }$, $\boldsymbol{\alpha} \in \mathbb{C}^{M \times M}$. Moreover, the indices $i_1\in \{1,\dots,N_1\}$  $i_2\in \{1,\dots,N_2\}$ are fixed.
	\begin{definition}[$*$-Tensor product]
		The product $(\boldsymbol{\mathcal{A}}*\boldsymbol{\mathcal{B}})\in \mathbb{C}^{N_1 \times N_3 \times M \times M } $ is defined as
		\begin{equation*}
			(\boldsymbol{\mathcal{A}}*\boldsymbol{\mathcal{B}})_{i_1,i_2,:,:}:= \sum_{k=1}^{N_2}{\boldsymbol{\mathcal{A}}_{i_1,k,:,:}}\;\boldsymbol{\mathcal{B}}_{k,i_2,:,:} .
		\end{equation*}
	\end{definition}
	
	\begin{definition}[Tensor-Hypervector product] \label{def:THV}
		The product $(\boldsymbol{\mathcal{A}}*A) \in \mathbb{C}^{N_1 \times M \times M }$  is defined as
		\begin{equation*}
			(\boldsymbol{\mathcal{A}}*A)_{i_1,:,:}:= \sum_{k=1}^{N_2}{\boldsymbol{\mathcal{A}}_{i_1,k,:,:}}\;A_{k,:,:}.
		\end{equation*}
		We also need to define the action of a $3$-mode tensor from the left. Every tensor with three modes that acts, will act, or is the outcome of a $*$-product from the left, will be denoted with a "$D$" (dual) as apex {, and, in the remainder of this work, we will use $B^D_{k,:,:}$ to denote $(B^D)_{k,:,:}$ }. We define, $\; (B^D*\boldsymbol{\mathcal{A}})^D\in \mathbb{C}^{ N_2 \times M \times M }$ as
		\begin{equation*}
			(B^D*\boldsymbol{\mathcal{A}})^D_{i_2,:,:}:= \sum_{k=1}^{N_1}{{B^D_{k,:,:}}\;\boldsymbol{\mathcal{A}}_{k,i_2,:,: }}.
		\end{equation*}
	\end{definition}

	Note that the following $4$-mode tensor is the identity for $*$-products introduced above
	\begin{equation*}
		\mathbb{C}^{N_1 \times N_1 \times M \times M} \ni	(\boldsymbol{\mathcal{I}}_{*})_{i_1,i_2,: , :}:= \left \{\begin{array}{ll}
			I_M, & \hbox{ if } i_1=i_2 \\
			0_M, & \hbox{otherwise}
		\end{array}   \right. . 
	\end{equation*}
	\begin{definition}[Hypervector inner-product]
		The product $(B^D*A) \in \mathbb{C}^{M\times M}$ is defined as
		\begin{equation*}
			(B^D*A)_{:,:}:= \sum_{k=1}^{N_1}{B^D_{k,:,:}\;A_{k,:,: }}.
		\end{equation*}
	\end{definition}
	
	\begin{definition}[Tensor-matrix product]
		The products $(\boldsymbol{\mathcal{A}} \times \boldsymbol{\alpha}),\; ( \boldsymbol{\alpha} \times \boldsymbol{\mathcal{A}} ) \in  \mathbb{C}^{N_1 \times N_2 \times M \times M }$ are defined as
		\begin{equation*}
			(\boldsymbol{\mathcal{A}} \times \boldsymbol{\alpha})_{i_1,i_2,:,:}:= {\boldsymbol{\mathcal{A}}_{i_1,i_2,:,: }}\;\boldsymbol{\alpha} \quad \hbox{ and } \quad ( \boldsymbol{\alpha} \times \boldsymbol{\mathcal{A}} )_{i_1,i_2,:,:}:= {\boldsymbol{\alpha} \,\boldsymbol{\mathcal{A}}_{i_1,i_2,:,: }}.
		\end{equation*}
	\end{definition}
	
	\begin{definition}[Hypervector-matrix product]
		The products 	$({A} \times \boldsymbol{\alpha}), \; ( \boldsymbol{\alpha} \times {A} ) \in  \mathbb{C}^{N_2 \times M \times M }$ are defined as
		\begin{equation*}
			({A} \times \boldsymbol{\alpha})_{i_1,:,:}:= {{A}_{i_1,:,: }}\boldsymbol{\alpha} \quad \hbox{ and } \quad ( \boldsymbol{\alpha} \times {A} )_{i_1,:,:}:= {\boldsymbol{\alpha}{A}_{i_1,:,: }}.	\end{equation*}
	\end{definition}
	
	\begin{definition}[Vector-to-Hypervector]
		Given $\mathbf{a} \in \mathbb{C}^N$ we define the product $ A=\mathbf{a}\otimes I_M \in \mathbb{C}^{N \times M \times M}$ as
		$$A_{i_1,:,:}=\mathbf{a}_{i_1}I_M \quad  i_1 \in \{1,\dots,N\}.$$
	\end{definition}
	Note that rearranging $A$ as a block matrix, we get the usual Kronecker product.
	All the products are clearly distributive with respect to the usual addition. On the other {hand}, the associativity of some of the products is less obvious. Therefore, we state it in the following Lemma \ref{lemma:properties_defintions1}, postponing its proof to Appendix {\ref{app:assoc}}.
	\begin{lemma}\label{lemma:properties_defintions1}
			The following statements show that the tensor-tensor and tensor-hypervector $\ast$-products are associative. 
			\begin{itemize}
				\item Given $\boldsymbol{\mathcal{A}} \in \mathbb{C}^{N_1 \times N_1 \times M \times M}$, $A \in \mathbb{C}^{ N_1 \times M \times M}$ we have
				\begin{equation*}
					(\boldsymbol{\mathcal{A}}*\boldsymbol{\mathcal{A}})*{A}= \boldsymbol{\mathcal{A}}*(\boldsymbol{\mathcal{A}}*{A}){.}
				\end{equation*}
				\item Given  $B^D \in \mathbb{C}^{N_1 \times  M \times M}$, ${A} \in \mathbb{C}^{N_2 \times M \times M }$, $\boldsymbol{\mathcal{A}} \in \mathbb{C}^{N_1\times N_2 \times M \times M }$, then
				\begin{equation*}
					(B^D * \boldsymbol{\mathcal{A}})^D *A=B^D * (\boldsymbol{\mathcal{A}} *A).
				\end{equation*}
				\item Given  $\boldsymbol{\mathcal{A}} \in \mathbb{C}^{N_1\times N_2 \times M \times M },\;\boldsymbol{\mathcal{B}} \in \mathbb{C}^{N_2 \times N_3 \times M \times M },\; \boldsymbol{\mathcal{C}} \in \mathbb{C}^{N_3\times N_1 \times M \times M }$, then
				\begin{equation*}
					(\boldsymbol{\mathcal{C}}*\boldsymbol{\mathcal{A}})*\boldsymbol{\mathcal{B}}=\boldsymbol{\mathcal{C}}*(\boldsymbol{\mathcal{A}}*\boldsymbol{\mathcal{B}}).
				\end{equation*}
			\end{itemize}
		\end{lemma}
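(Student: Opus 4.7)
The proof is essentially a bookkeeping exercise: each of the three $*$-products is, at the fiber level, a finite sum of products of $M\times M$ matrices, so associativity will reduce to (i) associativity of ordinary matrix multiplication on the $M\times M$ blocks and (ii) the freedom to swap the order of two finite sums. My plan is to handle each of the three bullets by writing out both sides of the claimed identity index by index, collapsing them to the same double sum over the "internal" mode indices, and then invoking matrix associativity on each summand.

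For the first bullet, I fix $i_1\in\{1,\dots,N_1\}$ and compute
\begin{equation*}
\bigl((\boldsymbol{\mathcal{A}}*\boldsymbol{\mathcal{A}})*A\bigr)_{i_1,:,:}
=\sum_{k=1}^{N_1}(\boldsymbol{\mathcal{A}}*\boldsymbol{\mathcal{A}})_{i_1,k,:,:}\,A_{k,:,:}
=\sum_{k=1}^{N_1}\sum_{j=1}^{N_1}\boldsymbol{\mathcal{A}}_{i_1,j,:,:}\,\boldsymbol{\mathcal{A}}_{j,k,:,:}\,A_{k,:,:},
\end{equation*}
while on the other side
\begin{equation*}
\bigl(\boldsymbol{\mathcal{A}}*(\boldsymbol{\mathcal{A}}*A)\bigr)_{i_1,:,:}
=\sum_{j=1}^{N_1}\boldsymbol{\mathcal{A}}_{i_1,j,:,:}\,(\boldsymbol{\mathcal{A}}*A)_{j,:,:}
=\sum_{j=1}^{N_1}\sum_{k=1}^{N_1}\boldsymbol{\mathcal{A}}_{i_1,j,:,:}\,\boldsymbol{\mathcal{A}}_{j,k,:,:}\,A_{k,:,:}.
\end{equation*}
The triple product $\boldsymbol{\mathcal{A}}_{i_1,j,:,:}\,\boldsymbol{\mathcal{A}}_{j,k,:,:}\,A_{k,:,:}$ of $M\times M$ matrices is unambiguous by ordinary matrix associativity, and exchanging the two finite sums over $j$ and $k$ yields the identity.

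The second and third bullets are of exactly the same shape. For the mixed statement $(B^D*\boldsymbol{\mathcal{A}})^D*A=B^D*(\boldsymbol{\mathcal{A}}*A)$, I would expand the left-hand side using the "dual" rule from Definition \ref{def:THV} and the definition of the hypervector inner product: fixing nothing (the output is an $M\times M$ matrix) gives
\begin{equation*}
\bigl((B^D*\boldsymbol{\mathcal{A}})^D*A\bigr)_{:,:}
=\sum_{j=1}^{N_2}\!\Bigl(\sum_{k=1}^{N_1}B^D_{k,:,:}\,\boldsymbol{\mathcal{A}}_{k,j,:,:}\Bigr)A_{j,:,:},
\end{equation*}
while the right-hand side gives $\sum_{k}B^D_{k,:,:}\bigl(\sum_{j}\boldsymbol{\mathcal{A}}_{k,j,:,:}A_{j,:,:}\bigr)$; again, matrix associativity on the triple $B^D_{k,:,:}\boldsymbol{\mathcal{A}}_{k,j,:,:}A_{j,:,:}$ plus a swap of the two finite sums closes the argument. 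The third bullet $(\boldsymbol{\mathcal{C}}*\boldsymbol{\mathcal{A}})*\boldsymbol{\mathcal{B}}=\boldsymbol{\mathcal{C}}*(\boldsymbol{\mathcal{A}}*\boldsymbol{\mathcal{B}})$ is the fully tensorial version and proceeds identically after fixing a pair of outer indices $(i_1,i_2)$; both sides reduce to $\sum_{j,k}\boldsymbol{\mathcal{C}}_{i_1,j,:,:}\boldsymbol{\mathcal{A}}_{j,k,:,:}\boldsymbol{\mathcal{B}}_{k,i_2,:,:}$.

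There is no genuine obstacle here; the only care needed is to keep the dimensions of the internal summation indices straight (the middle tensor's "column" dimension must match the right operand's leading mode, and similarly on the left), which is guaranteed by the compatibility hypotheses stated in each bullet. The proof is therefore short, and each case is a one-line swap of two finite sums on top of associativity of matrix multiplication on $M\times M$ blocks.
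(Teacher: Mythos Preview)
Your proposal is correct and matches the paper's own proof essentially line for line: the paper also fixes an outer index, expands each side via the definitions into a double sum of triple products of $M\times M$ matrices, and concludes by swapping the two finite sums and invoking associativity of ordinary matrix multiplication. There is nothing to add.
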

	Having introduced the required products and their basic properties, we are ready to derive the tensor non-Hermitian Lanczos algorithm.
	
	\section{The Lanczos-Type Process}\label{sec:lanczos}
	Using the operations given in Table \ref{table:ops}, {we propose} a sensible generalization of Krylov subspaces where, instead of the usual matrix-vector product, the tensor-hypervector product is used to generate the subspaces.
	Section \ref{sec:KrylovSubspaces} describes these tensor Krylov-type subspaces in detail and defines biorthogonal bases for them.
	A Lanczos-type algorithm which generates these biorthogonal bases is proposed in Section \ref{sec:tensorLanczos}.
	In Section \ref{sec:tensorLanczosProperties} two important properties of the classical Lanczos algorithm are generalized, namely, the tensor representation of the three-term recurrence relations for the biorthogonal bases and the matching moment property.
	The computational cost and storage requirements of the algorithm are discussed in Section~\ref{sec:num_prop}.
	
	\subsection{Krylov-type tensor subspaces}\label{sec:KrylovSubspaces}
	
	Consider the tensor $\boldsymbol{\mathcal{A}}\in \mathbb{C}^{N \times N \times M \times M}$. We define the \textit{polynomials} of degree {$\ell$} of $\boldsymbol{\mathcal{A}}$ as
	\begin{align*}
		p(\boldsymbol{\mathcal{A}}) &:= \sum_{k=0}^{{\ell}}   \boldsymbol{\mathcal{A}}^{k_*} \times \boldsymbol{\alpha}_k ,\\
		p^{D}(\boldsymbol{\mathcal{A}}) &:=   \sum_{k=0}^{{\ell}} \boldsymbol{\alpha}_k^H \times \boldsymbol{\mathcal{A}}^{k_*}   ,
	\end{align*}
	where $\boldsymbol{\mathcal{A}}^{k_*}$ stands for $k$ $\ast$-multiplications of $\boldsymbol{\mathcal{A}}$ by itself, and $\boldsymbol{\alpha}_k^H$ is the conjugate transpose of $\boldsymbol{\alpha}$.
	Given the tensors ${A}\in \mathbb{C}^{N \times M \times M}$,  $B\in \mathbb{C}^{N \times M \times M}$
	we can define the Krylov-type subspaces 
	\begin{align*}
		\mathcal{K}_n(\boldsymbol{\mathcal{A}},A)&:= \{p(\boldsymbol{\mathcal{A}})*A \hbox{ s.t. }  deg(p) \leq n-1 \},\\
		\mathcal{K}_n^{D}(B^D,\boldsymbol{\mathcal{A}})&:= \{B^D * p^{D}(\boldsymbol{\mathcal{A}}) \hbox{ s.t. }  deg(p^D) \leq n-1 \}.
	\end{align*}
	Every element in $\mathcal{K}_n(\boldsymbol{\mathcal{A}},A)$ is a tensor in $\mathbb{C}^{N \times M \times M}$ and can be written as 
	\begin{equation*}
		p(\boldsymbol{\mathcal{A}})* A = \sum_{k=0}^{n-1}   (\boldsymbol{\mathcal{A}}^{k_*}\times  \boldsymbol{\alpha}_k ) *A. 
	\end{equation*}
	From now on we will assume {that $A$ is of the form $A=\mathbf{a} \otimes I_M$ for some} $\mathbf{a} \in \mathbb{C}^N$. In this case, the matrices $\boldsymbol{\alpha}_k$ commute with $A$ giving 
	\begin{equation*}
		p(\boldsymbol{\mathcal{A}})* A = \sum_{k=0}^{n-1}   (\boldsymbol{\mathcal{A}}^{k_*}   *A)\times \boldsymbol{\alpha}_k. 
	\end{equation*}
	An analogous result holds for $B^D*p^D(\boldsymbol{\mathcal{A}})$ {when $B^D=\overline{\mathbf{b}} \otimes I_M$ for any $\mathbf{b} \in \mathbb{C}^N$, where $\overline{\mathbf{b}}$ is the conjugated vector.} 
	
	Driven by the analogy with the matrix case, our aim is to build two ``biorthonormal bases'' for the Krylov-type subspaces $\mathcal{K}_{n}(\boldsymbol{\mathcal{A}},A)$ and $ \mathcal{K}_n^{D}(B^D, \boldsymbol{\mathcal{A}})$. 
	The following Definition \ref{def:bases} allows to characterize spaces spanned by 3-mode tensors.
	\begin{definition}\label{def:bases}
		Given $V_1,\dots,V_n \in \mathbb{C}^{N \times M \times M}$, $W_1^D,\dots,W^D_n \in \mathbb{C}^{N \times M \times M}$,  we define the subspaces 
		\begin{align*}
			\langle V_1,\dots, V_n \rangle&:= \left\{V= \sum_{{k}=1}^n V_{{k}} \times \boldsymbol{\eta}_{{k}}, \textrm{ for } \boldsymbol{\eta}_1,\dots, \boldsymbol{\eta}_n \in \mathbb{C}^{M \times M}\right\}; \\
			\langle W_1^D,\dots, W_n^D \rangle&:= \left\{W^D= \sum_{{k}=1}^n  \boldsymbol{\eta}_{{k}} \times W^D_{{k}}, , \textrm{ for } \boldsymbol{\eta}_1,\dots, \boldsymbol{\eta}_n \in \mathbb{C}^{M \times M} \right\}.
		\end{align*}
		We say that $V_1,\dots, V_n$ is a \emph{basis} {for} the subspace $\langle V_1,\dots, V_n \rangle$ and $W_1^D,\dots, W_n^D$ is a \emph{basis} {for} the subspace $\langle W_1^D,\dots, W_n^D \rangle$.
	\end{definition}
	Biorthonormal bases for Krylov-type subspaces are represented by the tensors $\boldsymbol{\mathcal{V}}_{n} \in \mathbb{C}^{N\times n \times M \times M }$ and  $\boldsymbol{\mathcal{W}}_{n}\in \mathbb{C}^{n\times N \times M \times M }$ satisfying
		\begin{equation}\label{eq:biorthcond}
			\boldsymbol{\mathcal{W}}_{n}*\boldsymbol{\mathcal{V}}_{n}=\boldsymbol{\mathcal{I}}_* \in \mathbb{R}^{n \times n \times M \times M},
		\end{equation}
		with the hypervectors  $V_{k}:= (\boldsymbol{\mathcal{V}}_n)_{:,k,:,:}$ and   $W^D_{k}:= (\boldsymbol{\mathcal{W}}_n)_{k,:,:,:}$, for $k=1,\dots,n$, forming, respectively, bases for $\mathcal{K}_{n}(\boldsymbol{\mathcal{A}},A)$ and $ \mathcal{K}_n^{D}(B^D, \boldsymbol{\mathcal{A}})$, i.e., 
		\begin{align*}
			\langle V_1,\dots, V_n \rangle = \mathcal{K}_{n}(\boldsymbol{\mathcal{A}},A), \quad \langle W_1^D,\dots, W_n^D \rangle = \mathcal{K}_n^{D}(B^D, \boldsymbol{\mathcal{A}}).
	\end{align*}
	In the following section we derive such bases by constructing the tensor non-Hermitian Lanczos Algorithm.

	\subsection{The tensor Lanczos process}\label{sec:tensorLanczos}
	Given the inputs $\boldsymbol{\mathcal{A}}\in \mathbb{C}^{N\times N \times M \times M}$ and $\mathbf{v}, \mathbf{w} \; {\in\mathbb{C}^{N}}$, Algorithm \ref{alg:Lanczos} constructs, when no breakdown occurs, the bases $\boldsymbol{\mathcal{W}}_{n}$ and $\boldsymbol{\mathcal{V}}_{n}$,  {for $\mathcal{K}_{n}(\boldsymbol{\mathcal{A}},A)$ and $ \mathcal{K}_n^{D}(B^D, \boldsymbol{\mathcal{A}})$, respectively, which satisfy} the \emph{$\ast$-biorthogonality conditions} \eqref{eq:biorthcond}.
	
	\begin{algorithm}[H]
		\caption{non-Hermitian Lanczos for Tensors} \label{alg:Lanczos}
		\DontPrintSemicolon
		\KwIn{$\boldsymbol{\mathcal{A}}\in \mathbb{C}^{N\times N \times M \times M}$, $\mathbf{v}, \mathbf{w} \in \mathbb{C}^{N}$ such that $\mathbf{w}^H\mathbf{v} = 1$.}
		\KwOut{$ V_1,\dots,V_{n}, \,W_1^D,\dots, W_{n}^D \in \mathbb{C}^{N \times M \times M}$
			spanning respectively $\mathcal{K}_n(\boldsymbol{\mathcal{A}},V)$, $\mathcal{K}_n(W^D, \boldsymbol{\mathcal{A}})$.}
		{Initialize: } $V_0=W_{0}^D=0$, $\boldsymbol{\beta}_1=0$, $V:= \mathbf{v}\otimes I_M,\; {W^D:= \overline{\mathbf{w}}}\otimes I_M,\; \, V_1 := V,\; W_1^D := W^D$ \;
		\For{$k=1, \dots n$}{
			$\boldsymbol{\alpha}_{k} = W_{k}^D * \boldsymbol{\mathcal{A}} * V_{k}$\;
			$\widehat{W}_{k+1}^D = W_{k}^D * \boldsymbol{\mathcal{A}} -  \boldsymbol{\alpha}_{k} \times W_{k}^D - \boldsymbol{\beta}_{k} \times W_{k-1}^D$\;
			${\widehat V}_{k+1} = \boldsymbol{\mathcal{A}} * V_{k} - V_{k} \times \boldsymbol{\alpha}_{k} - V_{k-1} \times \boldsymbol{\gamma}_k$\;
			\text{Set a non singular matrix} $\boldsymbol{\gamma}_{k+1}$\;
			$\boldsymbol{\beta}_{k+1}= (\boldsymbol{\gamma}_{k+1})^{-1}(\widehat{W}_{k+1}^D * \widehat{V}_{k+1})$\;
			\If{$ \boldsymbol{\beta}_{k+1}$ is singular}{\texttt{Stop} \;}
			$V_{k+1}= {\widehat V}_{k+1} \times \boldsymbol{\beta}_{k+1}^{-1}$\;
			$W_{k+1}^D= (\boldsymbol{\gamma}_{k+1})^{-1} \times {\widehat W}_{k+1}^D $\;
		}
	\end{algorithm}
	Details on how the algorithm constructs these bases using three-term recurrences is described below.
	\begin{itemize}
		\item By definition, the first hypervectors of the bases are $W_1^D, V_1$ satisfying $W_1^D*V_1=I_M$;
		\item Consider the vector $\widehat{W}^D_2 \in \mathcal{K}_2(W^D,\boldsymbol{\mathcal{A}})$ given by
		\begin{equation*}
			\widehat{W}^D_2:=W_1^D*\boldsymbol{\mathcal{A}}-\boldsymbol{\alpha}_1 \times W_1^D. 
		\end{equation*}
		Imposing that $\widehat{W}_2^D$ satisfies the $*$-biorthogonal condition $\widehat{W}_2^D*V_1=0$, we have $\boldsymbol{\alpha}_1=W_1^D * \boldsymbol{\mathcal{A}} *V_1$.
		\item Analogously, define the vector $\widehat{V}_2 \in \mathcal{K}_2(\boldsymbol{\mathcal{A}},V)$ given by
		\begin{equation*}
			\widehat{V}_2 := \boldsymbol{\mathcal{A}}*V_1-V_1\times \boldsymbol{\alpha}_1.
		\end{equation*}
		Imposing the $*$-biorthogonality condition, we find the $*$-biorthogonal vectors
		\begin{equation*}
			V_2:=\widehat{V}_2\times \boldsymbol{\beta}_1^{-1} \hbox{ where } \boldsymbol{\beta}_1=\widehat{W}_2^D*\widehat{V}_2=\widehat{W}_2^D*V_1 \; \hbox{ and } W_2=\widehat{W}_2.
		\end{equation*} 
		\item Clearly  $\mathcal{K}_2({\boldsymbol{\mathcal{A}},V})=\langle V_1,V_2 \rangle$ and $\mathcal{K}_2^D(W^D,{\boldsymbol{\mathcal{A}}})=\langle W^D_1,W^D_2 \rangle$.
		\item Now, assume the $*$-biorthonormal bases $V_1,\dots,V_{{k}}$ and $W^D_1,\dots,W^D_{{k}}$ are available. Consider the hypervector
		\begin{equation*}
			\widehat{W}^D_{{k}+1}:={W}_{{k}}^D*\boldsymbol{\mathcal{A}}-\sum_{i=1}^{{k}}\boldsymbol{\boldsymbol{\eta}}_i \times  {W}_{i}^D.
		\end{equation*}
		The matrices $\boldsymbol{\eta}_i$ are determined by the conditions $\widehat{W}_{{k}+1}^D*V_{i}=0$, for $i~=~1,\dots,~k$, which give
		\begin{equation*}
			\boldsymbol{\eta}_i=W_{{k}}^D*\boldsymbol{\mathcal{A}}*V_{i}, \quad \hbox{ for } i=1,\dots,{k}.
		\end{equation*}
		In particular, since $\boldsymbol{\mathcal{A}}*V_i \in \mathcal{K}_{i+1}(\boldsymbol{\mathcal{A}},V )$, we get $\boldsymbol{\eta}_i=0$ for $i=1,\dots,{k}-2$. 
		An analogous argument is valid for $\widehat{V}_{k+1}$.
		This leads to the following three-term recurrences
		
		\begin{subequations} \label{eq:3r}
			\begin{align}
				W_{k+1}^D=W_{k}^D*\boldsymbol{\mathcal{A}}-\boldsymbol{\alpha}_{k}\times W_{k}^D-\boldsymbol{\beta}_{k}\times W_{k-1}^D, \label{eq:3ra}  \\
				V_{k+1}\times \boldsymbol{\beta}_{k+1}=\boldsymbol{\mathcal{A}} *V_{k}- V_{k}\times \boldsymbol{\alpha}_{k}-V_{k-1},	 \label{eq:3rb}
			\end{align}
		\end{subequations}
		with coefficients	
		\begin{equation}\label{eq:3r:coeff}
			\boldsymbol{\alpha}_{k}= W_{k}^D*\boldsymbol{\mathcal{A}}*V_{k}, \;\; \boldsymbol{\beta}_{k+1}=W_{k+1}^D*\boldsymbol{\mathcal{A}}*V_{k}.
		\end{equation}
		\item To prove that $\langle V_1, \dots, V_n \rangle=\mathcal{K}_n(\boldsymbol{\mathcal{A}},V)$ and $\langle W_1^D, \dots, W_n^D \rangle=\mathcal{K}_n(W^D,\boldsymbol{\mathcal{A}})$, it is enough to use induction and the fact that $V_{{k}} \in \mathcal{K}_{{k}}(\boldsymbol{\mathcal{A}},V)$ and $W_{{k}}^D \in \mathcal{K}_{{k}}(W^D,\boldsymbol{\mathcal{A}})$ for all ${k}=1,\dots,n$. 
	\end{itemize}
	Let us finally observe that, should $\boldsymbol{\beta}_{k+1}$ not be invertible, we would get a \textit{breakdown} in the algorithm.
	
	Different rescaling strategies are possible by setting an invertible coefficient $\boldsymbol{\gamma}_{k+1}$ and noticing that
	$$ (\boldsymbol{\gamma}_{k+1})^{-1} \times W_{k+1}^D \ast V_{k+1} \times \boldsymbol{\gamma}_{k+1} = {I_{M}}. $$
	This last observation completes the construction of Algorithm \ref{alg:Lanczos}.

	\subsection{Main Properties of the Tensor Lanczos Algorithm}\label{sec:tensorLanczosProperties}
	It is important to note that the coefficients in the three-term recurrences {\eqref{eq:3r}} can be represented by a sparse 4-mode tensor. To this aim, let us consider $\boldsymbol{\mathcal{T}}_{n} \in \mathbb{C}^{n \times n \times M \times M}$ as the tensor defined as
	\begin{equation}\label{eq:Tcoeff}
		(\boldsymbol{\mathcal{T}}_{n})_{i_1,i_2, :, :}:= \left \{\begin{array}{lll}
			\boldsymbol{\alpha}_{i_1},   & \hbox{ if }  i_1=i_2  & \hbox{ and }  1 \leq i_1 \leq n \\
			\boldsymbol{\gamma}_{i_1}, & \hbox{ if } i_2=i_1+1 & \hbox{ and }  1 \leq i_1 \leq n-1 \\
			\boldsymbol{\beta}_{i_1}, & \hbox{ if } i_2=i_1-1 & \hbox{ and } 2 \leq i_1 \leq n\\
			\boldsymbol{0}, & \hbox{ otherwise } \\
		\end{array}   \right. . 
	\end{equation} 
	where $\boldsymbol{\alpha}_{i_1}, \boldsymbol{\beta}_{i_1}, \boldsymbol{\gamma}_{i_1}$ are the matrices in Algorithm \ref{alg:Lanczos}. The tensor $\boldsymbol{\mathcal{T}}_{n}$ is a generalization of the so-called \emph{(complex) Jacobi matrix} associated with the non-Hermitian Lanczos algorithm; see, e.g., \cite{PozPraStr16} and references therein.
	By using $\boldsymbol{\mathcal{T}}_{n}$, Theorem \ref{thm:3TRR} provides a compact representation of the three-term recurrences constructing the biorthogonal bases.
	\begin{theorem}\label{thm:3TRR}
		The three-recurrences Eqs. \eqref{eq:3r} can be written in the compact form	
		\begin{subequations} \label{eq:recurrences}
			\begin{align}
				\boldsymbol{\mathcal{A}}*\boldsymbol{\mathcal{V}}_n=\boldsymbol{\mathcal{V}}_n*\boldsymbol{\mathcal{T}}_n+ \widetilde{\boldsymbol{\mathcal{V}}}_n  \label{eq:tens3term1}\\
				\boldsymbol{\mathcal{W}}_n*\boldsymbol{\mathcal{A}}=\boldsymbol{\mathcal{T}}_n*\boldsymbol{\mathcal{W}}_n+\widetilde{\boldsymbol{\mathcal{W}}}_n \label{eq:tens3term2}
			\end{align}
		\end{subequations}
		where $  \widetilde{\boldsymbol{\mathcal{V}}}_n  \in \mathbb{C}^{N \times n \times M \times M}$ {is}
		\begin{equation*}
			(\widetilde{\boldsymbol{\mathcal{V}}}_n)_{:,k,:,:}  := \left \{\begin{array}{ll}
				V_{n+1} \times \boldsymbol{\beta}_{n+1},    & \hbox{ if }  k=n  \\
				\boldsymbol{0}, & \hbox{ otherwise } \\
			\end{array}   \right.,
		\end{equation*}
		{and} $\widetilde{\boldsymbol{\mathcal{W}}}_n  \in \mathbb{C}^{n \times N \times M \times M}$ {is}
		\begin{equation*}
			(\widetilde{\boldsymbol{\mathcal{W}}}_n)_{k,:,:,:}  := \left \{\begin{array}{ll}
				\boldsymbol{\gamma}_{n+1} \times W^D_{n+1},    & \hbox{ if }  k=n  \\
				\boldsymbol{0}, & \hbox{ otherwise } \\
			\end{array}   \right. . 
		\end{equation*}
	\end{theorem}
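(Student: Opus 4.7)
The plan is to verify the two tensor identities \eqref{eq:tens3term1} and \eqref{eq:tens3term2} one slice at a time, relying only on the slicing convention, on the $\ast$-product definitions from Section \ref{sec:pre}, on the sparsity pattern \eqref{eq:Tcoeff}, and on the three-term recurrences produced by Algorithm \ref{alg:Lanczos}. The argument is essentially bookkeeping: there is no conceptual obstacle, only care about the boundary terms and about which matrix among $\boldsymbol{\alpha}_k,\boldsymbol{\beta}_k,\boldsymbol{\gamma}_k$ occupies which slot of the sparse tensor $\boldsymbol{\mathcal{T}}_n$.

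For \eqref{eq:tens3term1}, I would fix $k\in\{1,\dots,n\}$ and compute the $k$-th ``column'' slice of each side. Unfolding the $\ast$-tensor product gives, on the left-hand side, $(\boldsymbol{\mathcal{A}}\ast\boldsymbol{\mathcal{V}}_n)_{:,k,:,:}=\boldsymbol{\mathcal{A}}\ast V_k$, while, on the right-hand side, $(\boldsymbol{\mathcal{V}}_n\ast\boldsymbol{\mathcal{T}}_n)_{:,k,:,:}=\sum_{j=1}^{n}V_j\times(\boldsymbol{\mathcal{T}}_n)_{j,k,:,:}$. By the sparsity pattern \eqref{eq:Tcoeff}, the inner sum collapses to at most the three consecutive terms indexed by $j=k-1,k,k+1$, whose nonzero slices are the $\boldsymbol{\gamma}$-, $\boldsymbol{\alpha}_k$-, and $\boldsymbol{\beta}_{k+1}$-matrices produced by Algorithm \ref{alg:Lanczos}. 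Rearranging the recurrence for $\widehat{V}_{k+1}$ in the algorithm into the form $\boldsymbol{\mathcal{A}}\ast V_k=V_{k-1}\times\boldsymbol{\gamma}_k+V_k\times\boldsymbol{\alpha}_k+V_{k+1}\times\boldsymbol{\beta}_{k+1}$ (with $V_0=0$) matches the two sides identically for $k\le n-1$, so $(\widetilde{\boldsymbol{\mathcal{V}}}_n)_{:,k,:,:}=\boldsymbol{0}$ in that range.

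The only remaining case is $k=n$: the summation defining $\boldsymbol{\mathcal{V}}_n\ast\boldsymbol{\mathcal{T}}_n$ runs only over $j\le n$, so the contribution $V_{n+1}\times\boldsymbol{\beta}_{n+1}$ present in the recurrence is absent from the right-hand side and must reappear in the residual tensor. This is exactly the stated definition of $\widetilde{\boldsymbol{\mathcal{V}}}_n$, completing \eqref{eq:tens3term1}. The proof of \eqref{eq:tens3term2} is completely symmetric: slicing along the first mode, one has $(\boldsymbol{\mathcal{W}}_n\ast\boldsymbol{\mathcal{A}})_{k,:,:,:}=W_k^D\ast\boldsymbol{\mathcal{A}}$ and $(\boldsymbol{\mathcal{T}}_n\ast\boldsymbol{\mathcal{W}}_n)_{k,:,:,:}=\sum_j(\boldsymbol{\mathcal{T}}_n)_{k,j,:,:}\times W_j^D$, and one invokes the left-acting recurrence $W_k^D\ast\boldsymbol{\mathcal{A}}=\boldsymbol{\beta}_k\times W_{k-1}^D+\boldsymbol{\alpha}_k\times W_k^D+\boldsymbol{\gamma}_{k+1}\times W_{k+1}^D$, which is obtained by inverting the rescaling $W_{k+1}^D=\boldsymbol{\gamma}_{k+1}^{-1}\times\widehat{W}^D_{k+1}$ in Algorithm \ref{alg:Lanczos}. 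The same slice-matching argument on the trailing slice $k=n$ then produces the stated $\widetilde{\boldsymbol{\mathcal{W}}}_n$, and the theorem follows.
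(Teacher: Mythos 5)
Your proposal follows essentially the same ``direct inspection'' route as the paper: slice both sides along the appropriate mode, unfold the $\ast$-product, let the sparsity of $\boldsymbol{\mathcal{T}}_n$ collapse the sum to the three adjacent indices, and identify the residual slice at $k=n$ with the boundary term. You are in fact a little more explicit than the paper's proof, which only verifies the equality for $i_2\in\{1,\dots,n-1\}$ using \eqref{eq:3rb} and declares the rest analogous, whereas you also state the boundary slice $k=n$ and the left-acting recurrence.

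There is, however, one index that deserves a second look. You rearrange line 5 of Algorithm~\ref{alg:Lanczos} into $\boldsymbol{\mathcal{A}}\ast V_k=V_{k-1}\times\boldsymbol{\gamma}_k+V_k\times\boldsymbol{\alpha}_k+V_{k+1}\times\boldsymbol{\beta}_{k+1}$, which is correct, and then assert this matches $\sum_{j}V_j\times(\boldsymbol{\mathcal{T}}_n)_{j,k,:,:}$. But a literal reading of \eqref{eq:Tcoeff} places $\boldsymbol{\gamma}_{i_1}$ on the superdiagonal, so the $(j,k)=(k-1,k)$ slot is $\boldsymbol{\gamma}_{k-1}$, not $\boldsymbol{\gamma}_{k}$; the same off-by-one appears on the dual side, where your (correct) recurrence $W_k^D\ast\boldsymbol{\mathcal{A}}=\boldsymbol{\beta}_k\times W_{k-1}^D+\boldsymbol{\alpha}_k\times W_k^D+\boldsymbol{\gamma}_{k+1}\times W_{k+1}^D$ requires $\boldsymbol{\gamma}_{k+1}$ in slot $(k,k+1)$, whereas \eqref{eq:Tcoeff} literally puts $\boldsymbol{\gamma}_k$ there. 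The paper's own proof sidesteps this entirely by invoking the simplified three-term recurrences \eqref{eq:3r}, in which the $\boldsymbol{\gamma}$'s have already been absorbed (equivalently, $\boldsymbol{\gamma}_k=I_M$). Since you chose to carry the general rescaling through the argument, you should either do the same restriction, or explicitly note that the superdiagonal of \eqref{eq:Tcoeff} must be read as $\boldsymbol{\gamma}_{i_2}$ (equivalently $\boldsymbol{\gamma}_{i_1+1}$) for it to encode Algorithm~\ref{alg:Lanczos}'s recurrences; otherwise the claim that the two slices ``match identically'' does not hold as written.
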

	\begin{proof}
		By direct inspection.	We have, for all $i_1 \in \{1,\dots,N \}$, $i_2 \in \{1,\dots,n-1\}$  
		
		\begin{equation} \label{eq:fK1}
			(\boldsymbol{\mathcal{A}}*\boldsymbol{\mathcal{V}}_n)_{i_1,i_2,:,:}=\sum_{k=1}^{N}\boldsymbol{\mathcal{A}}_{i_1,k,:,:}(\boldsymbol{\mathcal{V}}_n)_{k,i_2,:,:}=\sum_{k=1}^{N}\boldsymbol{\mathcal{A}}_{i_1,k,:,:}({V}_{i_2})_{k,:,:}=(\boldsymbol{\mathcal{A}}*{V}_{i_2})_{i_1,:,:}
		\end{equation}
		and
		\begin{equation}\label{eq:fK2}
				\begin{split}
					(\boldsymbol{\mathcal{V}}_n*\boldsymbol{\mathcal{T}}_n+ \widetilde{\boldsymbol{\mathcal{V}}}_n)_{i_1,i_2,:,:}&= \sum_{k=1}^n(\boldsymbol{\mathcal{V}}_n)_{i_1,k,:,:}(\boldsymbol{\mathcal{T}}_n)_{k,i_2,:,:} \\
					& = (V_{i_2})_{i_1}\boldsymbol{\alpha}_{i_2}+(V_{i_2+1})_{i_1} \boldsymbol{\beta}_{i_2+1}+(V_{i_2-1})_{i_1} \\
					& = (V_{i_2}\times \boldsymbol{\alpha}_{i_2}+V_{i_2+1}\times \boldsymbol{\beta}_{i_2+1}+V_{i_2-1})_{i_1} .
				\end{split}
			\end{equation}
			The equality  between \eqref{eq:fK1} and \eqref{eq:fK2} follows using \eqref{eq:3rb} and proves \eqref{eq:tens3term1}.
			The remaining part of the theorem can be proved analogously. 
	\end{proof}
	If we $\ast$-multiply Eq.~\eqref{eq:tens3term1} by $\boldsymbol{\mathcal{W}_n}$ from the left we obtain the expression 

	\begin{equation*}\label{eq:tridiagproj}
		\boldsymbol{\mathcal{T}}_n=\boldsymbol{\mathcal{W}}_n*	\boldsymbol{\mathcal{A}}*\boldsymbol{\mathcal{V}}_n.
	\end{equation*}
	{The tensor $\boldsymbol{\mathcal{T}}_n$ satisfies a generalization of the \textit{matching moment property} which is  stated in Theorem \ref{thm:mmp}.}
	\begin{theorem}[Matching Moment Property]\label{thm:mmp}
		Let $\boldsymbol{\mathcal{A}}, V, W$ and $\boldsymbol{\mathcal{T}}_n$ be as described above, then
		\begin{equation*}
			W^D * (\boldsymbol{\mathcal{A}}^{k_*}) * V = E_1^D * (\boldsymbol{\mathcal{T}}_n)^{k_*}* E_1, \quad \text{ for } \quad k=0,\dots, 2n-1,
		\end{equation*}
		where $E_1 = \mathbf{e}_1 \otimes I_M$ and $\mathbf{e}_1$ is the first vector of the Euclidean base of $\mathbb{C}^{N \times N}$.
	\end{theorem}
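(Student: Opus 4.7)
The plan is to iterate the compact three-term recurrence (\ref{eq:tens3term1}) of Theorem \ref{thm:3TRR} to obtain a closed-form expansion for $\boldsymbol{\mathcal{A}}^{k_*}*\boldsymbol{\mathcal{V}}_n$, then sandwich that expansion between $E_1^D$ and $E_1$ and show that all correction terms vanish in the range $k\leq 2n-1$. A direct induction on $k$, using only (\ref{eq:tens3term1}) and the associativity of the $*$-products (Lemma \ref{lemma:properties_defintions1}), yields
\begin{equation*}
\boldsymbol{\mathcal{A}}^{k_*} * \boldsymbol{\mathcal{V}}_n \;=\; \boldsymbol{\mathcal{V}}_n * \boldsymbol{\mathcal{T}}_n^{k_*} \;+\; \sum_{j=0}^{k-1} \boldsymbol{\mathcal{A}}^{j_*} * \widetilde{\boldsymbol{\mathcal{V}}}_n * \boldsymbol{\mathcal{T}}_n^{(k-1-j)_*}.
\end{equation*}
$*$-multiplying this on the left by $\boldsymbol{\mathcal{W}}_n$ and invoking the $*$-biorthogonality (\ref{eq:biorthcond}), the identity reduces to
\begin{equation*}
E_1^D * \boldsymbol{\mathcal{W}}_n * \boldsymbol{\mathcal{A}}^{k_*} * \boldsymbol{\mathcal{V}}_n * E_1 \;=\; E_1^D * \boldsymbol{\mathcal{T}}_n^{k_*} * E_1 \;+\; \sum_{j=0}^{k-1} R_j,
\end{equation*}
with $R_j := E_1^D * \boldsymbol{\mathcal{W}}_n * \boldsymbol{\mathcal{A}}^{j_*} * \widetilde{\boldsymbol{\mathcal{V}}}_n * \boldsymbol{\mathcal{T}}_n^{(k-1-j)_*} * E_1$. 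A direct check, exploiting $E_1^D * \boldsymbol{\mathcal{W}}_n = W^D$ and $\boldsymbol{\mathcal{V}}_n * E_1 = V$, produces the target left-hand side $W^D*\boldsymbol{\mathcal{A}}^{k_*}*V$.

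The hard part is showing that each $R_j$ vanishes for $k\leq 2n-1$. Since $\widetilde{\boldsymbol{\mathcal{V}}}_n$ has its only non-zero ``column'' at index $n$, namely $V_{n+1}\times\boldsymbol{\beta}_{n+1}$, a direct evaluation of the sandwich yields
\begin{equation*}
R_j \;=\; (W^D * \boldsymbol{\mathcal{A}}^{j_*} * V_{n+1})\,\boldsymbol{\beta}_{n+1}\,(\boldsymbol{\mathcal{T}}_n^{(k-1-j)_*})_{n,1,:,:}.
\end{equation*}
I would then cover all $j \in \{0,\dots,k-1\}$ by two complementary arguments. If $j \leq n-1$, then $W^D * \boldsymbol{\mathcal{A}}^{j_*}$ lies in $\mathcal{K}^D_{j+1}(W^D,\boldsymbol{\mathcal{A}}) \subseteq \mathcal{K}^D_n(W^D,\boldsymbol{\mathcal{A}})$; expanding it in the basis $W_1^D,\dots,W_n^D$ and using that every $W_\ell^D$ with $\ell \leq n$ is $*$-biorthogonal to $V_{n+1}$ (a consequence of running Algorithm \ref{alg:Lanczos} for one more step without breakdown) makes the first factor of $R_j$ vanish. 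If instead $j \geq k-n+1$, i.e., $k-1-j \leq n-2$, the tridiagonality of $\boldsymbol{\mathcal{T}}_n$ forces $(\boldsymbol{\mathcal{T}}_n^{m_*})_{n,1,:,:} = 0_M$ for every $m \leq n-2$, because the shortest tridiagonal chain connecting column $1$ to row $n$ has length $n-1$; this kills the third factor of $R_j$.

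The union of the two index ranges exhausts $\{0,\dots,k-1\}$ precisely when $n-1 \geq k-n$, i.e., when $k \leq 2n-1$, which matches the range asserted in the theorem. The structural ingredients are therefore only three: the iterated compact recurrence of Theorem \ref{thm:3TRR}, the tridiagonal sparsity of $\boldsymbol{\mathcal{T}}_n$, and the concentrated support of $\widetilde{\boldsymbol{\mathcal{V}}}_n$ combined with the extended $*$-biorthogonality produced by Algorithm \ref{alg:Lanczos}. The main obstacle is the double bookkeeping needed to combine left-side Krylov containments with right-side tridiagonal vanishing, but once these are packaged cleanly the proof is essentially a translation of the classical non-Hermitian Lanczos moment-matching argument into the $*$-product tensor calculus.
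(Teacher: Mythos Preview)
Your argument is correct, and it is a genuinely different route from the one taken in the paper. The paper's proof in Appendix~\ref{appendix:mmp} proceeds via a polynomial framework: it introduces sesquilinear block forms $[\cdot,\cdot]_{\boldsymbol{\mathcal{A}}}$ and $[\cdot,\cdot]_n$ on the polynomial set $\mathcal{P}_*$, shows (Lemma~\ref{lemma:op:basis}) that the three-term recurrences \eqref{eq:p:rec} produce the \emph{same} biorthonormal polynomial sequences with respect to both forms, and then proves an abstract lemma to the effect that two sesquilinear forms sharing biorthonormal polynomials of degrees $0,\dots,n-1$ must share their first $2n$ moments. Your proof, by contrast, stays entirely at the level of the tensor objects: you iterate \eqref{eq:tens3term1} to expand $\boldsymbol{\mathcal{A}}^{k_*}*\boldsymbol{\mathcal{V}}_n$, then annihilate the remainder terms by pairing the Krylov containment $W^D*\boldsymbol{\mathcal{A}}^{j_*}\in\langle W_1^D,\dots,W_n^D\rangle$ (for $j\leq n-1$) with the tridiagonal vanishing $(\boldsymbol{\mathcal{T}}_n^{m_*})_{n,1,:,:}=0_M$ (for $m\leq n-2$). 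Your approach is more elementary and self-contained, using only Theorem~\ref{thm:3TRR}, the sparsity of $\boldsymbol{\mathcal{T}}_n$, and the $\ast$-biorthogonality already established; the paper's approach builds a reusable biorthogonal-polynomial framework that mirrors the classical moment-problem viewpoint and makes the connection to \cite{GisPoz22} explicit. One small remark: in your case $j\leq n-1$ you invoke $W_\ell^D * V_{n+1}=0$ ``assuming one more step without breakdown''; in fact you only need $W_\ell^D * (V_{n+1}\times\boldsymbol{\beta}_{n+1}) = W_\ell^D * \widehat{V}_{n+1}=0$, which holds by construction (line~5 of Algorithm~\ref{alg:Lanczos}) regardless of whether $\boldsymbol{\beta}_{n+1}$ is invertible, so the caveat can be dropped.
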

	The proof of Theorem \ref{thm:mmp} can be found in Appendix \ref{appendix:mmp}.
	
	\subsection{Numerical properties} \label{sec:num_prop}
	The tensor $\boldsymbol{\mathcal{A}}$ is obtained by discretizing $\boldsymbol{\mathsf{A}}(t)$ and stores in $\boldsymbol{\mathcal{A}}_{k,l,:,:}$ the coefficients representing the $(k,l)$-th element of  $\boldsymbol{\mathsf{A}}(t)$.
	Different methods of discretization are possible.
	In this paper, following \cite{GisPoz22}, we discretize the interval $I$ obtaining the mesh
	\begin{equation}\label{eq:mesh}
		\tau_i = h (i-1) + a, \quad i = 1,\dots,M, \quad h = \frac{b-a}{M-1}. 
	\end{equation}
	For this mesh the discretization of $\boldsymbol{\mathsf{A}}(t) = \left[\boldsymbol{\mathsf{A}}_{k,\ell}(t) \right]_{k,\ell}^N$ is the tensor
	\begin{equation} \label{eq:block_matrix_formulation}
		\boldsymbol{\mathcal{A}}_{k,\ell,:,:} := \boldsymbol{\nu}_{k,\ell}, \quad k,\ell =1,\dots,N,
	\end{equation}
	where the matrices $\boldsymbol{\nu}_{k,\ell}\in \mathbb{C}^{M \times M}$ are lower triangular matrices defined as
	\begin{equation*}
		\left(\boldsymbol{\nu}_{k,\ell} \right)_{i,j} = \left\{\begin{array}{ll}
			\boldsymbol{\mathsf{A}}_{k,\ell}(\tau_i) h & i \geq j \\
			0 & i < j
		\end{array}\right . .
	\end{equation*}
	This discretization scheme has an accuracy of order $\mathcal{O}(h) = \mathcal{O}(1/M)$ and, indeed, in Section~\ref{sec:ex}, we show that when this discretization scheme is used, the approximation of the bilinear form of interest also has an accuracy of $\mathcal{O}(1/M)$. 
	
	The computational cost of Algorithm \ref{alg:Lanczos} depends on the chosen number of discretization points $M$ and on the number of iterations $n$.
	In this algorithm the dominant cost is the multiplication of a 4-mode tensor with a 3-mode tensor, i.e., $\boldsymbol{\mathcal{A}}\ast V_k$ and $W_k^D\ast \boldsymbol{\mathcal{A}}$.
	The worst case complexity of one such product is $\mathcal{O}(M^3 N^2)$, for a total cost of $\mathcal{O}(2 n M^3 N^2)$.
	However, since $\boldsymbol{\mathsf{A}}(t)$ is sparse in all practical applications, the computational cost can be much lower.
	For example, if there are $N_{\textrm{nz}}<N$ nonzeros on each column of $\boldsymbol{\mathsf{A}}(t)$, then the cost reduces to $\mathcal{O}(2 n M^3 N N_{\textrm{nz}} )$. It is important to note, moreover, that the term $M^3$ arises from the matrix-matrix multiplication between $V_k$ and the blocks in $\boldsymbol{\mathcal{A}}$. Since these blocks arise after a discretization strategy, it is likely that they will exhibit a particular structure that can be exploited for efficient computations. E.g., in the discretization used in this work, these blocks are lower triangular matrices for which the matrix-matrix multiplication has a cost of $\frac{M^3}{2}$.
	
	Finally, the storage cost of Algorithm \ref{alg:Lanczos} is three basisvectors $V_i$, three basisvectors $W_i$ and $3n-1$ nonzero elements of $\boldsymbol{\mathcal{T}_{n}}$, for a total of $\mathcal{O}(6M^2 N+ 3M^2 n)$.
	Only three basisvectors must be kept in memory thanks to the underlying three-term recurrence relation.
	
	Let us conclude this section observing that, as highlighted from the previous discussion, both, the computational cost and storage requirement depend strongly on the number $M$ of discretization points  used.
	For the discretization scheme described above, we expect that a large number of discretization points is required since its accuracy is $\mathcal{O}(1/M)$.
	This justifies the search for more accurate discretization schemes, for example Legendre polynomial approximation.
	However, other discretization schemes will not be discussed here since {they} are subject of future research and since the discretization scheme introduced above suffices to illustrate the potential of Algorithm \ref{alg:Lanczos}.
	
	\section{Breakdowns}\label{sec:break}
	If the matrix $\boldsymbol{\beta}_{k+1}$ is singular, then line 11 in Algorithm \ref{alg:Lanczos} cannot be performed and the algorithm breaks down.
	This breakdown issue is inherited from the (usual) non-Hermitian Lanczos algorithm; see, e.g., {\cite{WilBook65, ParTayLiu85, Gut92, Gut94b, FrGuNa93}}.
	There are two different {kinds} of breakdowns.
	The first one, the so-called \emph{lucky breakdown}, occurs when one of the Krylov-type subspaces $\mathcal{K}_k(\boldsymbol{\mathcal{A}},A)$ or $\mathcal{K}_k^{D}(B^D,\boldsymbol{\mathcal{A}})$ becomes invariant under $\star$-multiplication with $\mathcal{A}$ from the left or right, respectively.
	Suppose that $\mathcal{K}_k(\boldsymbol{\mathcal{A}},A)$ is an invariant subspace, this will result, in exact arithmetic, in $\widehat{V}_{k+1} = \boldsymbol{0}$ in Line 5 of Algorithm \ref{alg:Lanczos}. 
	In finite precision $\widehat{V}_{k+1}\in\mathbb{C}^{N\times M \times M}$ will never be exactly zero.
	Therefore, the Frobenius norm
	\begin{equation*}
		\Vert \widehat{V}_{k+1}\Vert_F := \sum_{i=1}^{N} \sum_{j=1}^{M} \sum_{{\ell=1}}^{M} \left\vert\left(\widehat{V}_{k+1}\right)_{i,j,{\ell}}\right\vert^2
	\end{equation*}
	is used to define the following criterion to detect a lucky breakdown:
	\begin{equation*}\label{eq:luckyCrit}
		\frac{\Vert \widehat{V}_{k+1}\Vert_F}{\Vert{V}_k\Vert_F} < \epsilon,
	\end{equation*}
	with $\epsilon<<1$, a user-defined threshold {close to machine precision}.
	The same applies to the case $\widehat{W}_{k+1}^D = \boldsymbol{0}$.
	The second kind of breakdown occurs when both $\widehat{V}_{k+1} \neq \boldsymbol{0}$ and $\widehat{W}_{k+1}^D \neq \boldsymbol{0}$, but $\boldsymbol{\beta}_{k+1}\in\mathbb{C}^{M\times M}$ is still singular, then the algorithm breaks down.  This case is known as {a} \emph{serious breakdown}.
	In numerical computation, the condition number of $\boldsymbol{\beta}_{k+1}$ is monitored to decide if Line 11 can be computed sufficiently {accurate}.
	A user-defined threshold $\epsilon_s>>1$ specifies an upper bound on the allowed condition number of $\boldsymbol{\beta}_{k+1}$.
	That is, if the ratio of its largest and smallest singular value is larger than $\epsilon_s$, i.e., $\sigma_{\max}(\boldsymbol{\beta}_{k+1})/\sigma_{\min}(\boldsymbol{\beta}_{k+1}) > \epsilon_s$, then the algorithm breaks down.
	Note that the choice of $\boldsymbol{\gamma}_{k+1}$ will influence the condition number of $\boldsymbol{\beta}_{k+1}.$

	In the usual non-Hermitian Lanczos algorithm, {a} serious breakdown can be treated by using a so-called \emph{look-ahead} strategy; see, e.g., \cite{Tay82, ParTayLiu85, FrGuNa93, Gut92, Gut94b, BreRedSad91, Brezinski1992}. Connection between serious-breakdowns, (formal) orthogonal polynomials, and matching moment property can be found in \cite{Draux96, PozPra19}.
	If needed, an analogous look-ahead strategy may be implemented for the tensor Lanczos algorithm. 
	At the moment, an easier strategy to deal with serious breakdowns is to reformulate the problem so to change the input vectors $\mathbf{v}, \mathbf{w}$.
	For instance, when $\mathbf{w}= \boldsymbol{e}_i$ and $\mathbf{v}=\boldsymbol{e}_i$, a serious breakdown is likely to happen due to the sparsity of $\boldsymbol{\mathcal{A}}$.
	However, we can rewrite the approximation of the time-ordered exponential $\boldsymbol{\mathsf{U}}(t)$ as
	$$\boldsymbol{e}_i^H\boldsymbol{\mathsf{U}}(t)\boldsymbol{e}_j = (\boldsymbol{e} + \boldsymbol{e}_i)^H \boldsymbol{\mathsf{U}}(t) \boldsymbol{e}_j - \boldsymbol{e}^H \boldsymbol{\mathsf{U}}(t) \boldsymbol{e}_j,
	$$
	with $\boldsymbol{e} = (1, \dots, 1)^H$. Then one can approximate $(\boldsymbol{e} + \boldsymbol{e}_i)^H \boldsymbol{\mathsf{U}}(t) \, \boldsymbol{e}_j$ and $\boldsymbol{e}^H \boldsymbol{\mathsf{U}}(t) \, \boldsymbol{e}_j$ separately, which are less likely going to have a breakdown thanks to the fact that $\boldsymbol{e}$ is a full vector; see, e.g., \cite[Section 7.3]{GolMeuBook10} and \cite{GisPoz22}.

\section{Numerical examples}\label{sec:ex}
Let us consider the following {smooth matrix-valued} function defined on a real interval $I = [a,b]$:
\begin{equation*}
	\boldsymbol{\mathsf{A}}(t): I \subset \mathbb{C}  \rightarrow \mathbb{C}^{N \times N}.
\end{equation*}
As anticipated in the Introduction, the time-ordered exponential of $\boldsymbol{\mathsf{A}}(t)$ is the unique matrix-valued function $\boldsymbol{\mathsf{U}}(t) \in \mathbb{C}^{N\times N}$ defined on  $I$ that is the solution of the system of linear ordinary differential equations
\begin{equation*}
	\frac{d}{dt}\boldsymbol{\mathsf{U}}(t) = \boldsymbol{\mathsf{A}}(t) \boldsymbol{\mathsf{U}}(t), \quad \boldsymbol{\mathsf{U}}(a)=I_N, \quad t \in I,
\end{equation*}
see \cite{dyson1952}.
In this section, we aim to approximate the bilinear form $\mathbf{w}^H \boldsymbol{\mathsf{U}}(t) \mathbf{v}$ by using the tensor non-Hermitian Lanczos algorithm.
If the matrix $\boldsymbol{\mathsf{A}}$ is so that $\boldsymbol{\mathsf{A}}(\tau_1)\boldsymbol{\mathsf{A}}(\tau_2)-\boldsymbol{\mathsf{A}}(\tau_2)\boldsymbol{\mathsf{A}}(\tau_1)=0$ for all $\tau_1,\tau_2 \in I$, then $\boldsymbol{\mathsf{U}}(t)=\exp\left(\int_s^{t} \boldsymbol{\mathsf{A}}(\tau)\, \textrm{d}\tau\right).$
Unfortunately, $\boldsymbol{\mathsf{U}}(t)$ -- and the related bilinear forms -- cannot be expressed by an analogous simple form in the general case.
Indeed, even for small matrices, $\boldsymbol{\mathsf{U}}(t)$ may be given by complicated special functions \cite{Xie2010,Hortacsu2018}.

A new approach for the approximation of a time-ordered exponential {bilinear form} was introduced in \cite{GisPozInv19,GisPoz21,GisPoz22} and it is based on $\star$-Lanczos, which is a symbolic algorithm.
This method is able to approximate the bilinear form 
$$\mathbf{w}^H \boldsymbol{\mathsf{U}}(t) \mathbf{v}, \quad t \in I$$
for the given vectors $\mathbf{w},\mathbf{v}$, with $\mathbf{w}^H, \mathbf{v} \neq 0$. 
The matrices $\boldsymbol{\alpha}_1,\dots,\boldsymbol{\alpha}_n$, $\boldsymbol{\beta}_2,\dots,\boldsymbol{\beta}_n$, and $\boldsymbol{\gamma}_2,\dots,\boldsymbol{\gamma}_n$, which compose the $4$-mode tensor $\boldsymbol{\mathcal{T}}_n$ in \eqref{eq:Tcoeff}, are obtained by running $n$ iterations of Algorithm \ref{alg:Lanczos} {with}, as inputs, the $4$-mode tensor $\boldsymbol{\mathcal{A}}$ in \eqref{eq:block_matrix_formulation} and the vectors $\mathbf{v},\mathbf{w}$.

Sampling the true solution $\mathbf{w}^H \boldsymbol{\mathsf{U}}(t) \mathbf{v}$ on the discretization nodes $\tau_i$ gives the vector $\mathbf{\hat{s}}$ defined as
$$ {\mathbf{\hat{s}}:= \begin{bmatrix}
		\mathbf{w}^H \boldsymbol{\mathsf{U}}(\tau_1) \mathbf{v} & \mathbf{w}^H \boldsymbol{\mathsf{U}}(\tau_2) \mathbf{v} & \dots & \mathbf{w}^H \boldsymbol{\mathsf{U}}(\tau_M) \mathbf{v}
	\end{bmatrix}^\top.} $$
Exploiting the results described in \cite{GisPoz22}, the sampled solution vector $\mathbf{\hat{s}}$ can be approximated by  
\begin{equation}\label{eq:odeappro}
	{\mathbf{s}_n} := \frac{1}{h}\left(\boldsymbol{\theta} \times \left(R_\ast(\boldsymbol{\mathcal{T}}_n)\right)_{1,1,:,:}\right) \mathbf{e}_1 \approx  \mathbf{\hat{s}},
\end{equation}
where  $R_\ast$ is the $\ast$-resolvent {, i.e., the tensor} 
\begin{equation*}\label{eq:starres}
	R_\ast(\boldsymbol{\mathcal{T}}_n) := \boldsymbol{\mathcal{I}}_\ast + \sum_{k=1}^{\infty} \left( \boldsymbol{\mathcal{T}}_n \right)^{k_\ast},
	\end{equation*}
and
\begin{equation*}\label{eq:heavisideDiscr}
	\boldsymbol{\theta} :=   h \begin{bmatrix}
		1 & 0&  0 &\dots & 0\\
		1 & 1 & 0 & \dots & 0\\
		\vdots & \vdots & \ddots & \ddots & \vdots\\
		1 & 1 & \dots & 1 & 0\\
		1 & 1 & \dots & 1 & 1
	\end{bmatrix} \in \mathbb{C}^{M\times M}.
\end{equation*}
{Overall, the accuracy of the approximation in \eqref{eq:odeappro} can not be better than $\mathcal{O}(h)$. This is due to the fact that, as explained in \cite{GisPoz22}, the discretization \eqref{eq:block_matrix_formulation} is based on a rectangular quadrature rule.}
Finally, using the Path-sum method \cite{Giscard2015} we get the following explicit expression for the $\ast$-resolvent in terms of a continued fraction
\begin{equation} \label{eq:star_resolvent}
    \begin{split}
    &R_{\ast}(\boldsymbol{\mathcal{T}_{n}})_{1,1,:,:} \hspace{-1pt} =  \\ & \hspace{-1pt} \left( \hspace{-2pt} \widetilde{\boldsymbol{\alpha}}_1 \hspace{-2pt} - \hspace{-2pt} \boldsymbol{\beta}_2 \left( \hspace{-2pt} \widetilde{\boldsymbol{\alpha}}_2 \hspace{-2pt} - \hspace{-2pt} \boldsymbol{\beta}_3 \left( \cdots \boldsymbol{\beta}_{n-1} \widetilde{\boldsymbol{\alpha}}_n^{-1}  \boldsymbol{\gamma}_{n-1} \cdots \right)^{-1} \boldsymbol{\gamma}_3 \right)^{ -1}  \boldsymbol{\gamma}_2 \right)^{-1},
    \end{split}
\end{equation}
with $\widetilde{\boldsymbol{\alpha}}_i = I_{{M}} - \boldsymbol{\alpha}_i$.
{Equation \eqref{eq:star_resolvent} is computed from the most inner term moving outward, where the inversion operation is performed using the \texttt{backslash} operator in Matlab\textsuperscript{\textregistered}.}
{Note that the $\ast$-resolvent and all inverses appearing in \eqref{eq:star_resolvent} are expected to exist for $h$ small enough, since their continuous counterparts exist under certain regularity conditions on $\boldsymbol{\mathsf{A}}(t)$; see \cite{GisPozInv19,GisPoz21}.}

The rest of the section is structured as follows. Section \ref{sec:errorMeasures} describes the measures that will be used to quantify the errors of the final solution and of the computed Krylov bases.
In Section \ref{sec:fexp_GP} two examples are discussed for which an analytical solution is available.
This allows us to compare the approximation to an exact solution and to show that it converges with the expected rate of convergence.
Small scale examples from NMR spectroscopy are discussed in Section \ref{sec:exp_NMR}. Finally, in Section \ref{sec:TT_ranks}, we analyse the approximability of the previously considered tensors by the Tensor Train representation.

\subsection{Error measures}\label{sec:errorMeasures}
In this section we define a series of error measures which {quantify} the quality of the generated biorthogonal bases and the accuracy of the approximation \eqref{eq:odeappro}.
These measures use the Frobenius norm, which, for a 4-modes tensor, is defined as
\begin{equation*}
	\Vert \boldsymbol{\mathcal{A}}\Vert_F := \sum_{i=1}^{N} \sum_{j=1}^{N} \sum_{k=1}^{M} \sum_{l=1}^{M} \left\vert\left(\boldsymbol{\mathcal{A}}\right)_{i,j,k,l}\right\vert^2.
\end{equation*}
The main goal is to analyze the rate of convergence as the number of discretization points $M$ is increased.
To stress the dependence on $M$ of computed quantities we use the superscript ``${(M)}$''.\\
{A generalization of the usual error measures for Krylov subspace methods are used. As a measure} for the biorthonormality of the bases $\boldsymbol{\mathcal{V}}_{n} \in \mathbb{C}^{N\times n \times M \times M }$ and  $\boldsymbol{\mathcal{W}}_{n}\in \mathbb{C}^{n\times N \times M \times M }$ generated by $n$ steps of the algorithm, we use
\begin{equation*}
	\textrm{err}_\textrm{o} := \frac{\Vert \boldsymbol{\mathcal{W}}_{n}^{(M)}*\boldsymbol{\mathcal{V}}_{n}^{(M)} - \boldsymbol{\mathcal{I}}_*\Vert_F}{\max (\Vert\boldsymbol{\mathcal{V}}_{n}^{(M)} \Vert_F,\Vert\boldsymbol{\mathcal{W}}_{n}^{(M)} \Vert_F)}.
\end{equation*}
{For a robust algorithm, it is paramount that the term $\max (\Vert\boldsymbol{\mathcal{V}}_{n}^{(M)} \Vert_F,\Vert\boldsymbol{\mathcal{W}}_{n}^{(M)} \Vert_F)$ remains small as $n$ increases. 
This can be obtained by employing an appropriate strategy to rescale the basisvectors in $\boldsymbol{\mathcal{V}}_{n}^{(M)}$ and $\boldsymbol{\mathcal{W}}_{n}^{(M)}$, i.e., by $\gamma_{k+1}$ in Algorithm \ref{alg:Lanczos}.
In this section we will choose $\gamma_{k+1}=I_M$ for all $k$, i.e., no rescaling. 
An effective rescaling strategy can improve on the numerical results reported below, but developing such a strategy is subject to future research.}

{To measure the} quality of the recurrences \eqref{eq:recurrences},
we use
\begin{align*}
	\textrm{err}_\textrm{V} := \frac{\Vert \boldsymbol{\mathcal{A}}*\boldsymbol{\mathcal{V}}_{n}^{(M)} - \boldsymbol{\mathcal{V}}_{n}^{(M)}*\boldsymbol{\mathcal{T}}_{n}^{(M)} -  \widetilde{\boldsymbol{\mathcal{V}}}_{n}^{(M)}\Vert_F}{\max(\Vert \boldsymbol{\mathcal{A}}*\boldsymbol{\mathcal{V}}_{n}^{(M)} \Vert_F, \Vert\boldsymbol{\mathcal{V}}_{n}^{(M)}*\boldsymbol{\mathcal{T}}_{n}^{(M)} + \widetilde{\boldsymbol{\mathcal{V}}}_{n}^{(M)}\Vert_F)},\\
	\textrm{err}_\textrm{W} :=\frac{\Vert \boldsymbol{\mathcal{W}}_{n}^{(M)}*\boldsymbol{\mathcal{A}} - \boldsymbol{\mathcal{T}}_{n}^{(M)}*\boldsymbol{\mathcal{W}}_{n}^{(M)} - \widetilde{\boldsymbol{\mathcal{W}}}_{n}^{(M)}\Vert_F}{\max(\Vert \boldsymbol{\mathcal{W}}_{n}^{(M)}*\boldsymbol{\mathcal{A}} \Vert_F, \Vert \boldsymbol{\mathcal{T}}_{n}^{(M)}*\boldsymbol{\mathcal{W}}_{n}^{(M)} + \widetilde{\boldsymbol{\mathcal{W}}}_n^{(M)}\Vert_F)}.
\end{align*}
As a measure for the Matching Moment Property, see Theorem \ref{thm:mmp}, we use
\begin{equation*}
	\textrm{err}_\textrm{M}(k) :=  \frac{\Vert W^D * (\boldsymbol{\mathcal{A}}^{k_*}) * V - E_1^D * (\boldsymbol{\mathcal{T}}_{n}^{(M)})^{k_*}* E_1 \Vert_F}{\max(\Vert W^D * (\boldsymbol{\mathcal{A}}^{k_*}) * V \Vert_F, \Vert E_1^D * (\boldsymbol{\mathcal{T}}_{n}^{(M)})^{k_*}* E_1\Vert_F) },
\end{equation*}
which should be close to zero for $k=0,\dots, 2n-1$.

Finally, to quantify the quality of the solution, we consider as error measure for \eqref{eq:odeappro} the quantity
\begin{equation*}
	\textrm{err}_{\textrm{sol}} := \frac{\Vert \mathbf{\hat{s}}-\mathbf{s}^{(M)}_n \Vert_2}{\Vert \mathbf{\hat{s}} \Vert_2},
\end{equation*}
where, if no analytic expression is available, an approximation of $\mathbf{s}$ is obtained by using \verb*|ode45| in Matlab\textsuperscript{\textregistered}. In the formula above, $\Vert \cdot \Vert_2$ stands for the usual Euclidean norm.
The rate at which $\textrm{err}_{\textrm{sol}}$ decreases as $M$ increases is expected to be $\mathcal{O}(h)= \mathcal{O}(1/M)$, i.e., the accuracy of the discretization used here.

\subsection{Proof of concept}\label{sec:fexp_GP}
As a proof of concept, we test our proposal on two problems {which} originally appeared in \cite{GisPoz22}. 
In both experiments {a discretization with $M$ points is used and} we run $n$ iterations of Algorithm \ref{alg:Lanczos} {with $\gamma_{k+1} \equiv I_M$}.
	This produces the tensor $\boldsymbol{\mathcal{T}}_{n}^{(M)}$ defined in \eqref{eq:Tcoeff} with coefficients $\boldsymbol{\alpha_{1}}^{(M)},\dots, \boldsymbol{\alpha_{n}}^{(M)}$ and $\boldsymbol{\beta_{2}}^{(M)},\dots, \boldsymbol{\beta_{n}}^{(M)}$, depending on $M$.
	For the two experiments considered here the result of the $\star$-Lanczos algorithm \cite{GisPoz22} is known.
	The coefficients resulting from this latter algorithm are bivariate functions $\alpha_{1}(t,s),\dots,\alpha_{n}(t,s)$ and $\beta_{2}(t,s),\dots,\beta_{n}(t,s)$, because $\star$-Lanczos is a symbolic algorithm.
	The tensor Lanczos algorithm is a discretization of the $\star$-Lanczos algorithm, which means that $\boldsymbol{\alpha_{i}}^{(M)}$ and $\boldsymbol{\beta_{i}}^{(M)}$ can be seen as discretizations of the functions $\alpha_{i}(t,s)$ and $\beta_{i}(t,s)$, respectively.\\
	Consider the evaluation of these functions on the mesh $\tau_i$:
	\begin{equation*}
		\boldsymbol{ \hat{\alpha}}_i := \begin{bmatrix}
			\alpha(\tau_i,\tau_j)
		\end{bmatrix}_{i,j=1}^{M}, \qquad \boldsymbol{ \hat{\beta}}_i := \begin{bmatrix}
			\beta(\tau_i,\tau_j)
		\end{bmatrix}_{i,j=1}^{M},
	\end{equation*}
	then, we can define the errors $\frac{\Vert \boldsymbol{\hat{\alpha}}_{i} - \boldsymbol{\alpha}_{i}^{(M)}\Vert_2}{\Vert \boldsymbol{\hat{\alpha}}_{i} \Vert_2}$, $i=1,\dots,n$, and $\frac{\Vert \boldsymbol{\hat{\beta}}_{i} - \boldsymbol{\beta}_{i}^{(M)}\Vert_2}{\Vert\boldsymbol{ \hat{\beta}}_{i} \Vert_2}$, $i=2^,\dots,n$.
	These errors will be used as {a} measure {for} the accuracy of the computed tensor $\boldsymbol{\mathcal{T}}_{n}^{(M)}$.
	The number of iterations $n$ is chosen equal to the problem size $N$, which allows us to compare all the available functions $\alpha_{1}(t,s),\dots, \alpha_N(t,s)$ and $\beta_{2}(t,s),\dots, \beta_N(t,s)$ with the elements in $\boldsymbol{\mathcal{T}}_{N}^{(M)}$ and track the convergence rate with $M$ of the latter.

\subsubsection{Time independent matrix}
Consider a constant matrix and starting vectors
\begin{equation*}
	{\boldsymbol{\mathsf{A}}}(t) = \begin{bmatrix}
		-1 & 1 & 1\\
		1 & 0 & 1\\
		1 & 1 & -1 
	\end{bmatrix},\qquad \mathbf{v},\mathbf{w} =  \begin{bmatrix}
		1\\
		0\\
		0
	\end{bmatrix},
\end{equation*}
and the interval $I=[0,1]$.
The inputs {of} Algorithm \ref{alg:Lanczos} are the starting  hypervectors $\mathbf{v}\otimes I_M, \mathbf{w}\otimes I_M$ and the tensor $\boldsymbol{\mathcal{A}}$  whose components $\boldsymbol{\mathcal{A}}_{i_1,i_2,:,:}$ are defined as
\begin{equation*}
	\boldsymbol{\mathcal{A}}_{{i_1,i_2},:,:} = \begin{cases}
		\boldsymbol{\theta}, &\text{ if } {i_1=i_2} = 1 \text{ or } {i_1=i_2}=3\\
		\boldsymbol{0}, &\text{ if } {i_1=i_2}=2 \\
		-\boldsymbol{\theta}, &\text{ otherwise} 
	\end{cases},
\end{equation*}
where $\boldsymbol{0}\in\mathbb{R}^{M\times M}$ is the null matrix.
The tensor $\boldsymbol{\mathcal{A}}$ is obtained by sampling the matrix-valued function $\boldsymbol{\mathsf{A}}(t)$ on the $M$ point mesh \eqref{eq:mesh} and following the definition in \eqref{eq:block_matrix_formulation}.
The output for $n=N=3$ iterations is $\boldsymbol{\mathcal{T}}_{N}^{(M)}$.
Table \ref{table:ex1_KrylovErrors} reports the Krylov error measures, which behave as expected for the recurrence measures.
The loss of biorthogonality observed for increasing values of $M$ is presumably due to the fact that no rescaling is used in the algorithm.

\begin{table}[!ht]
	\centering
	\begin{tabular}{l|lll}
		$M$                       & 10         & 100        & 1000       \\ \hline
		 \hline
		$\textrm{err}_\textrm{o}$ & 2.212e-15 & 4.127e-13 & 5.389e-11  \\
		$\textrm{err}_\textrm{V}$ & 2.234e-16 & 6.356e-15 & 4.327e-14 \\
		$\textrm{err}_\textrm{W}$ & 0          & 0          & 0          \\
	\end{tabular}
	\caption{Krylov error measures for time independent matrix.}
	\label{table:ex1_KrylovErrors}
\end{table}

On the other hand, this loss of biorthogonality does not compromise the moment matching capabilities of $\boldsymbol{\mathcal{T}}_{N}^{(M)}$, as it becomes evident from Table \ref{table:ex1_moment} where we report $\textrm{err}_\textrm{M}(k)$ for $k\leq 5=2n-1$.

\begin{table}[!ht]
	\centering
	\begin{tabular}{l|lll}
		\backslashbox{$k$}{$M$} & 10         & 100        & 1000       \\ \hline
		 \hline
		3                       & 6.078e-17 & 9.464e-17  & 9.580e-16 \\
		4                       & 6.611e-17 & 2.806e-16 & 1.199e-15 \\
		5                       & 9.840e-17   & 2.558e-16 & 2.844e-15
	\end{tabular}
	\caption{Measure for moment matching $\textrm{err}_\textrm{M}(k)$ for time independent matrix. {Entries for $k=0,1,2$ are omitted since they are equal to zero.}}
	\label{table:ex1_moment}
\end{table}

Moreover, as the values reported in Table \ref{table:ex1_coeffs} confirm, we can observe that the elements $\boldsymbol{\beta}_i$ converge at the expected rate of $\mathcal{O}(1/M)$.
The elements $\boldsymbol{\alpha}_i$ are very accurate for $M=10$ whereas for larger $M$, the accuracy of $\boldsymbol{\alpha}_i$ decreases: this decrease is, presumably, the result of error propagation in the numerical algorithm.
{This} error is still smaller than the expected order of $\mathcal{O}(1/M)$.

\begin{table}[!ht]
	\centering
	\begin{tabular}{lllllll}
		$M$                      & \multicolumn{2}{l}{10}  & \multicolumn{2}{l}{100} & \multicolumn{2}{l}{1000} \\ \hline
		 \hline
		\multicolumn{1}{l|}{$i$} & $\boldsymbol{\alpha}_i$   & $\boldsymbol{\beta}_i$   & $\boldsymbol{\alpha}_i$   & $\boldsymbol{\beta}_i$     & $\boldsymbol{\alpha}_i$   & $\boldsymbol{\beta}_i$     \\ \cline{2-7} 
		\multicolumn{1}{l|}{1}   & 0          & /          & 0          & /          & 0           & /          \\
		\multicolumn{1}{l|}{2}   & 7.639e-16 & 2.365e-01 & 3.867e-13 & 2.250e-02 & 1.555e-10  & 2.240e-03 \\
		\multicolumn{1}{l|}{3}   & 2.875e-15 & 2.365e-01 & 1.283e-11 & 2.250e-02 & 5.118e-08  & 2.240e-03
	\end{tabular}
	\caption{Error on the elements of the tridiagonal tensor for time independent matrix.}
	\label{table:ex1_coeffs}
\end{table}

Moreover, in this particular case, the analytical solution to the ODE is known; see \cite{GisPoz22}:
\begin{equation*}
	\mathbf{\hat{s}} = \begin{bmatrix}
		\left(\exp(A\tau_1) \right)_{11} & \left(\exp(A\tau_2) \right)_{11} & \dots & \left(\exp(A\tau_M) \right)_{11}
	\end{bmatrix}^\top,
\end{equation*}
with $\left(\exp(At) \right)_{11} = -\frac{1}{2} \sinh(2t) + \frac{1}{2}\cosh(2t) + \frac{1}{2} \cosh(\sqrt{2}t)$.
Hence, it is possible to compare this exact solution with \eqref{eq:odeappro}. Note that for $n=3$ the $\ast$-resolvent is given by the continued fraction:
\begin{equation*}
	R_{\ast}(\boldsymbol{\mathcal{T}_{3}})_{1,1,:,:} = \left(I_M - \boldsymbol{\alpha}_1 - \left(I_M - \boldsymbol{\alpha}_{2} - \left(I_M - \boldsymbol{\alpha}_{3}\right)^{-1} \boldsymbol{\beta}_3 \right)^{ -1} \boldsymbol{\beta}_2 \right)^{-1}.
\end{equation*}
Table \ref{table:ex1_errsol} shows the error measure $\textrm{err}_\textrm{sol}$ for increasing $M$, which convergences at the expected rate $\mathcal{O}(1/M)$.

\begin{table}[!ht]
	\centering
	\begin{tabular}{l|llll}
		$M$                                 & 10         & 100         & 1000          \\ \hline
		 \hline
		$\textrm{err}_\textrm{sol}$  & 8.230e-02 & 7.019e-03 & 6.918e-04 
	\end{tabular}
	\caption{Error of approximation to the quantity of interest $\mathbf{w}^H \boldsymbol{\mathsf{U}}(t) \mathbf{v}$ for time independent matrix.}
	\label{table:ex1_errsol}
\end{table}

\subsubsection{Time dependent matrix}
Consider the time-dependent matrix
\begin{equation*}
	\boldsymbol{\mathsf{\tilde{A}}}(t) = \begin{bmatrix}
		\cos(t) & 0 & 1 & 2 & 1\\
		0 & \cos(t)-t & 1-3t & t & 0\\
		0 & t & 2t+\cos(t)  & 0 & 0\\
		0 & 1 & 2t+1 & t + \cos(t) & t\\
		t & -t-1 & -6t-1 & 1-2t & \cos(t)-2t
	\end{bmatrix},
\end{equation*}
the starting vectors $\mathbf{v} = \mathbf{w} = \begin{bmatrix}
	1&	0&	0& 0 & 0
\end{bmatrix}^\top$, and the interval $I=[10^{-4},1]$.
As it becomes apparent {from} the results {reported} in Table \ref{table:ex2_KrylovErrors}, for this particular experiment, we obtain that the Krylov error measures and the recurrence measures are small whereas the biorthogonality measure is large.
\begin{table}[!ht]
	\centering
	\begin{tabular}{l|lll}
		$M$                         & 10       & 100      & 1000           \\ \hline
		 \hline
		$\textrm{err}_{\textrm{o}}$ & 1.069e-01  & 7.866e-01  & 8.645e-01 \\
		$\textrm{err}_{\textrm{V}}$ &  5.262e-16  & 1.557e-14 & 8.780e-16 \\
		$\textrm{err}_{\textrm{W}}$ & 7.062e-18  & 1.077e-17 & 1.041e-17 \\
	\end{tabular}
	\caption{Krylov error measures for time dependent matrix.}
	\label{table:ex2_KrylovErrors}
\end{table}
The loss of orthogonality is an inherent feature of Lanczos-like algorithms, and it does not necessarily {compromise algorithm's capability to produce an approximation to the bilinear form}. {Indeed, Table \ref{table:ex2_moments} confirms that the} matching moment property is not affected by the loss of $\ast$-biorthogonality. 

\begin{table}[!ht]
	\centering
	\begin{tabular}{l|lll}
		\backslashbox{$k$}{$M$} & 10         & 100        & 1000       \\ \hline
		 \hline
		3                       & 5.439e-17 & 1.375e-16  & 4.226e-16 \\
		4                       & 1.357e-16 & 1.923e-16 & 4.514e-16 \\
		5                       & 1.231e-16 & 2.370e-16 & 5.053e-15 \\
		6                       & 1.443e-16 & 3.069e-16 & 7.870e-15 \\
		7                       & 1.479e-16 & 3.553e-16  & 2.704e-14 \\
		8                       & 1.654e-16 & 3.271e-16 & 5.282e-14 \\
		9                       & 1.530e-16 & 3.775e-16 & 3.022e-13
	\end{tabular}
	\caption{Measure for moment matching $\textrm{err}_\textrm{M}(k)$ for time dependent matrix. {Entries for $k=0,1,2$ are omitted since they are equal to zero.}}
	\label{table:ex2_moments}
\end{table}

On the other hand, the results presented in Table \ref{table:ex2_coeffs},  where we report the error measures for the coefficients computed by Algorithm \ref{alg:Lanczos}, show that the loss of $\ast$-biorthogonality of the computed bases has a limited impact for the convergence of the algorithm to the solution. Indeed, in this case, for  all $\boldsymbol{\beta}_i$ the expected convergence rate is observed whereas, for $\boldsymbol{\alpha_{i}}$, only $i=1,2,3$ show the expected decrease in the error measure.

\begin{table}[!ht]
	\centering
	\begin{tabular}{lllllll}
		$M$                      & \multicolumn{2}{l}{10}  & \multicolumn{2}{l}{100} & \multicolumn{2}{l}{1000} \\ \hline
		 \hline
		\multicolumn{1}{l|}{$i$} & $\boldsymbol{\alpha}_i$   & $\boldsymbol{\beta}_i$  & $\boldsymbol{\alpha}_i$   & $\boldsymbol{\beta}_i$  &$\boldsymbol{\alpha}_i$   & $\boldsymbol{\beta}_i$     \\ \cline{2-7} 
		\multicolumn{1}{l|}{1}   & 0          & /          & 0          & /          & 0           & /          \\
		\multicolumn{1}{l|}{2}   & 6.986e-13 & 2.560e-01  & 2.139e-10  & 2.388e-02   & 4.751e-09   &  2.373e-03 \\
		\multicolumn{1}{l|}{3}   & 2.034e-02 & 4.057e-01 & 2.098e-03 & 3.493e-02 & 6.507e-03   &  3.445e-03 \\
		\multicolumn{1}{l|}{4}   & 6.473e-02 &  2.858e-01 & 6.679e-03 & 2.842e-02  & 9.859e-01  & 2.845e-03 \\
		\multicolumn{1}{l|}{5}   & 1.452e-01 & 1.754e-01 & 2.551e+01 & 1.695e-02 &  8.365e+05 & 1.840e-03\\
	\end{tabular}
	\caption{Error on the elements of the tridiagonal tensor for time dependent matrix.}
	\label{table:ex2_coeffs}
\end{table}

Table \ref{table:ex2_errsol} shows that the approximation to the quantity of interest converges at the rate $\mathcal{O}(1/M)$.
Hence, the loss of biorthogonality and the inaccurate coefficients of the tridiagonal tensor did not compromise this approximation.
\begin{table}[!ht]
	\centering
	\begin{tabular}{l|llll}
		$M$                                 & 10         & 100         & 1000          \\ \hline
		 \hline
		$\textrm{err}_\textrm{sol}$  & 2.360e-01 & 2.257e-02 & 2.404e-03 
	\end{tabular}
	\caption{Error of approximation to the quantity of interest $\mathbf{w}^H \boldsymbol{\mathsf{U}}(t) \mathbf{v}$ for time dependent matrix.}
	\label{table:ex2_errsol}
\end{table}

\subsection{NMR experiments}\label{sec:exp_NMR}
Nuclear magnetic resonance (NMR) spectroscopy studies the structure and dynamics of molecules by looking at nuclear spins \cite{Ho99,Le08}.
Computer simulations of NMR experiments are important because they can improve the design and analysis of laboratory experiments \cite{SmPaWiGe92}.
In this section, three small, realistic examples arising from NMR spectroscopy \cite{BalBon22} are discussed.
The ODE that governs the dynamics of nuclear spins during NMR spectroscopy is the Schr\"odinger equation
\begin{equation*}
	\frac{d}{dt} \phi(t) = -\imath 2\pi H(t) \phi(t), \quad t\in\left[0,\tau_{\textrm{exp}}\right]
\end{equation*}
where $H(t)$ is the so-called Hamiltonian, $\phi(t)$ the wave function, $\tau_{\textrm{exp}}$ the duration of the experiment and $\imath = \sqrt{-1}$.
The size of the Hamiltonian is related to the number of nuclear spins present in the system, for $l$ nuclear spins $H(t)$ is of the size $2^l\times 2^l$.
Hence, $H(t)$ grows exponentially with the number of spins, but it is a sparse matrix, making it an ideal candidate for a Lanczos-like algorithm.\\
The experiments discussed in this section use $M=500$ discretization points, because of memory constraints. It is important to note that such memory constraints {may} be overcome using the Tensor Train approximation presented  in Section~\ref{sec:TT_ranks}. 
The number of iterations of the tensor Lanczos algorithm is chosen to obtain the maximal attainable accuracy, which is determined by the discretization scheme with $M=500$. 
Since the discretization used here has an accuracy of $\mathcal{O}(1/M)$, the smallest number of iterations $n$ such that $\textrm{err}_{\textrm{sol}}$ is of order $10^{-3}$ suffices.
Choosing larger $n$ will not decrease $\textrm{err}_{\textrm{sol}}$ further for a fixed $M$ and will increase the computational cost.

\subsubsection{Experiment 1: weak coupling}
Consider four nuclear spins with heteronuclear dipolar couplings. In this framework, the Hamiltonian for a magic angle spinning (MAS) experiment \cite{HaSp98} is the diagonal matrix
\begin{align*}
	H(t) = \textrm{diag}\left[\{f_k(t)\}_{k=1}^{16}\right],\quad  f_k(t) = \alpha_k + \beta_k \cos(2\pi \nu  t) + \gamma_k \cos(4\pi \nu t),
\end{align*}
with $\alpha_k,\beta_k,\gamma_k\in\mathbb{R}$ and $\nu=$1e+4.
The diagonal matrix $A(t)=-\imath 2 \pi H(t)$ commutes with itself at all times and thus the solution $U(t)$ can be computed
\begin{align*}
	U(t) &= \textrm{diag}\left[\left\{\exp\left(-\imath \alpha_k t -\imath \frac{\beta_k}{2\pi\nu}\sin(2\pi \nu t)-\imath \frac{\gamma_k}{4\pi \nu} \sin(4\pi \nu t)\right) \right\}_{k=1}^{16}\right].
\end{align*}
The starting vectors are chosen to excite and measure the lowest oscillatory components in $U(t)$: $\mathbf{w} =\mathbf{v} = \begin{bmatrix}
	0 & 1 & 1 & 0 & 1 & 1& \dots & 0& 1& 1
\end{bmatrix}^\top$.
A typical experiment would run for a time of the order $1e-2$ seconds.
Since the problem is (highly) oscillatory and the current discretization requires many points to {accurately compute a solution}, we choose to restrict the experiment time to $\tau_{\textrm{exp}}=5e-5$.
This is a valid approach since the total time interval of $1e-2$ can be split into subintervals of length $5e-5$ and the solutions on the subintervals can be combined to obtain the solution on the whole interval.

Algorithm \ref{alg:Lanczos} is run for $n = 3$ iterations and the corresponding Krylov error measures are shown in Table \ref{table:exA_KrylovErrors}.
A first observation concerns the fact that going from $M=5$ to $M=50$ a large decrease of the biorthogonality measure is observed. {This is due to the fact that when discretizing with fewer discretization points, e.g., $M=5$, the original matrix in the ODE $-\imath 2 \pi H(t)$ is translated into a simpler (and inaccurate) discretized input, for which} the tensor Lanczos iteration converges fast.
The discretization $M=50$ represents the original input better, as is suggested by the stagnation of $\textrm{err}_{\textrm{o}}$ going from $M=50$ to $M=500$.
\begin{table}[!ht]
	\centering
	\begin{tabular}{l|lll}
		$M$                         & 5       & 50      & 500           \\ \hline
		\hline
		$\textrm{err}_{\textrm{o}}$ & 1.435e-03 & 1.422e-08   & 1.298e-08    \\
		$\textrm{err}_{\textrm{V}}$ & 2.947e-16  & 8.804e-14 & 7.063e-11  \\
		$\textrm{err}_{\textrm{W}}$ & 4.015e-17  & 2.553e-17 & 2.772e-17 \\
		$\textrm{err}_\textrm{sol}$  & 1.810e-01 & 2.024e-02 & 2.076e-03
	\end{tabular}
	\caption{Error measures for Experiment 1.}
	\label{table:exA_KrylovErrors}
\end{table}
The error $\textrm{err}_\textrm{sol}$ is computed using the analytical solution $\mathbf{w}^H \boldsymbol{\mathsf{U}}(t) \mathbf{v}$ evaluated in the discretization points and decays at the expected rate $\mathcal{O}(1/M)$.
Figure \ref{fig:exA_plotS} shows $\mathbf{\hat{s}}$ and the approximation $\mathbf{s}^{(M)}_n$, which clearly converges for increasing $M$.

\begin{figure}[!ht]
	\centering
	\setlength\figureheight{5cm}
	\setlength\figurewidth{\textwidth}
	\input{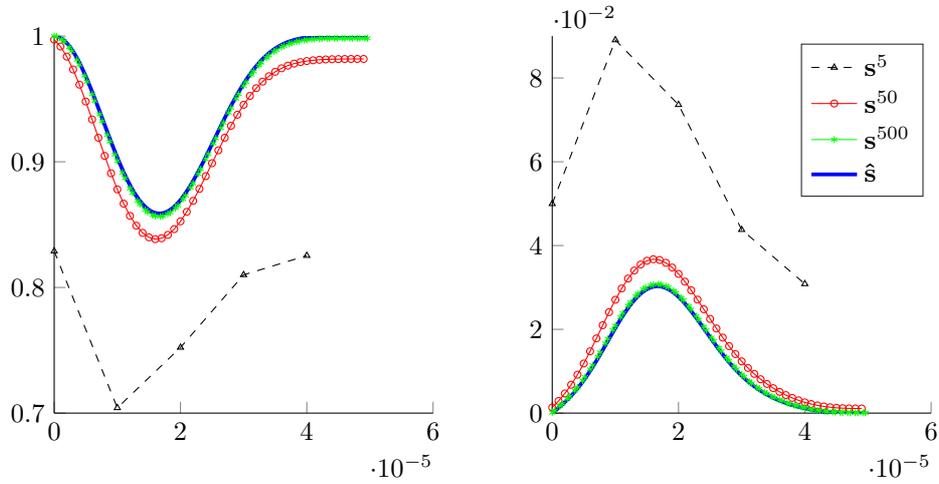}
	\caption{Quantity of interest $\mathbf{\hat{s}}$ and approximation $\mathbf{s}^{(M)}_n$ for Experiment 1. Real part on the left and imaginary part on the right.}
	\label{fig:exA_plotS}
\end{figure}

\subsubsection{Experiment 2: strong coupling}
MAS with four nuclear spins with homonuclear dipolar couplings leads to the Hamiltonian
\begin{align*}
	H(t) = \textrm{diag}\left[\{\alpha_k\}_{k=1}^{16}\right]+ B \cos(2\pi \nu t) + C \cos(4\pi \nu t),
\end{align*}
where $\alpha_k\in\mathbb{R}$ is a scalar, and $B,C\in\mathbb{R}^{16\times 16}$ are matrices with a sparsity structure as shown in Figure \ref{fig:strucB}.

\begin{figure}[!ht]
	\centering
	\scalebox{0.6}{\begin{tikzpicture} 
 \matrix (M) [matrix of nodes,left delimiter={[},right delimiter={]}] 
{$\times$&&&&&&&&&&&&&&&\\
&$\times$&$\times$&&$\times$&&&&$\times$&&&&&&&\\
&$\times$&$\times$&&$\times$&&&&$\times$&&&&&&&\\
&&&$\times$&&$\times$&$\times$&&&$\times$&$\times$&&&&&\\
&$\times$&$\times$&&$\times$&&&&$\times$&&&&&&&\\
&&&$\times$&&$\times$&$\times$&&&$\times$&&&$\times$&&&\\
&&&$\times$&&$\times$&$\times$&&&&$\times$&&$\times$&&&\\
&&&&&&&$\times$&&&&$\times$&&$\times$&$\times$&\\
&$\times$&$\times$&&$\times$&&&&$\times$&&&&&&&\\
&&&$\times$&&$\times$&&&&$\times$&$\times$&&$\times$&&&\\
&&&$\times$&&&$\times$&&&$\times$&$\times$&&$\times$&&&\\
&&&&&&&$\times$&&&&$\times$&&$\times$&$\times$&\\
&&&&&$\times$&$\times$&&&$\times$&$\times$&&$\times$&&&\\
&&&&&&&$\times$&&&&$\times$&&$\times$&$\times$&\\
&&&&&&&$\times$&&&&$\times$&&$\times$&$\times$&\\
&&&&&&&&&&&&&&&$\times$\\
};
 \end{tikzpicture}}
	\caption{Sparsity structure of $B$ and $C$ in Experiment 2, $\times$ denotes a nonzero element.}
	\label{fig:strucB}
\end{figure}
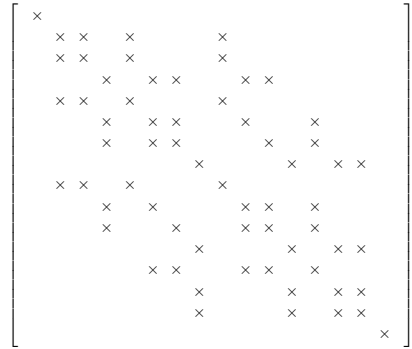
A typical experiment time is $1e-2$ seconds and $\nu = 1e+4$.
The simulated experiment time is $\tau_{\textrm{exp}} = 5e-6$, the size of the Krylov subspace is $k=4$ and $\mathbf{v}=\mathbf{w}=\begin{bmatrix}
	0 & 1 & 1 & 0 & 1 & 1& \dots & 0& 1& 1
\end{bmatrix}^\top$.
The corresponding error measures are shown in Table \ref{table:exB_KrylovErrors}, which show a similar behavior as for Experiment 1.
The measure $\textrm{err}_\textrm{sol}$ is computed by comparing $\mathbf{\hat{s}}$ to the solution obtained by \verb|ode45|.
\begin{table}[!ht]
	\centering
	\begin{tabular}{l|lll}
		$M$                         & 5       & 50      & 500           \\ \hline
		\hline
		$\textrm{err}_{\textrm{o}}$ &  3.596e-03  & 7.096e-05     & 7.028e-06    \\
		$\textrm{err}_{\textrm{V}}$ & 1.798e-16  & 4.573e-15  & 1.492e-13    \\
		$\textrm{err}_{\textrm{W}}$ & 3.783e-17   & 3.454e-17  & 3.508e-17 \\
		$\textrm{err}_\textrm{sol}$  &  3.877e-01  & 5.018e-02 &  4.281e-03
	\end{tabular}
	\caption{Error measures for Experiment 2.}
	\label{table:exB_KrylovErrors}
\end{table}

\subsubsection{Experiment 3: uncoupled spins under a pulse wave}
The Hamiltonian for four uncoupled spins under a pulse wave is
\begin{align*}
	H(t) = &\textrm{diag}\left[\{\alpha_k\}_{k=1}^{16}\right]+ B(0.5+\cos(4t) + \sin(10t) - 0.4 \sin(16t))\\
	&+ C(\sin(4t) + \cos(8t) + 2 \sin(12t)),
\end{align*}
with $\alpha_k\in\mathbb{R}$ {and $B\in\mathbb{R}^{16\times 16}$, $C\in\mathbb{C}^{16 \times 16}$} have {a structure as }shown in Figure~\ref{fig:strucC}.
\begin{figure}[!ht]
	\centering
	\scalebox{0.6}{\begin{tikzpicture} 
 \matrix (M) [matrix of nodes,left delimiter={[},right delimiter={]}] 
{&$\times$&$\times$&&$\times$&&&&$\times$&&&&&&&\\
$\times$&&&$\times$&&$\times$&&&&$\times$&&&&&&\\
$\times$&&&$\times$&&&$\times$&&&&$\times$&&&&&\\
&$\times$&$\times$&&&&&$\times$&&&&$\times$&&&&\\
$\times$&&&&&$\times$&$\times$&&&&&&$\times$&&&\\
&$\times$&&&$\times$&&&$\times$&&&&&&$\times$&&\\
&&$\times$&&$\times$&&&$\times$&&&&&&&$\times$&\\
&&&$\times$&&$\times$&$\times$&&&&&&&&&$\times$\\
$\times$&&&&&&&&&$\times$&$\times$&&$\times$&&&\\
&$\times$&&&&&&&$\times$&&&$\times$&&$\times$&&\\
&&$\times$&&&&&&$\times$&&&$\times$&&&$\times$&\\
&&&$\times$&&&&&&$\times$&$\times$&&&&&$\times$\\
&&&&$\times$&&&&$\times$&&&&&$\times$&$\times$&\\
&&&&&$\times$&&&&$\times$&&&$\times$&&&$\times$\\
&&&&&&$\times$&&&&$\times$&&$\times$&&&$\times$\\
&&&&&&&$\times$&&&&$\times$&&$\times$&$\times$&\\
};
 \end{tikzpicture}}
	\caption{Structure of $B$ and $C$ in Experiment 3, $\times$ denotes a nonzero element.}
	\label{fig:strucC}
\end{figure}
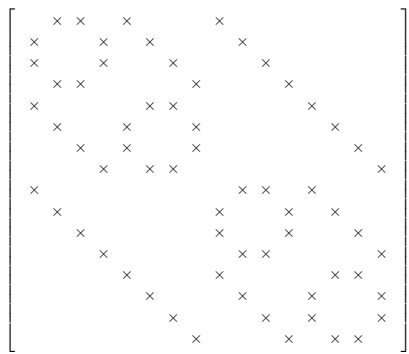
A practical experiment time ranges from $1e-6$ to $1e-3$ seconds, here $\tau_{\textrm{exp}} = 1e-3$ is used.
The starting vectors are $\mathbf{v}= \mathbf{w}= \begin{bmatrix}
	1 & \dots 1
\end{bmatrix}^\top$ and $n = 4$ iterations of the tensor Lanczos algorithm are run.
The Krylov error measures shown in Table \ref{table:exC_KrylovErrors} behave similarly to the measures observed for Experiment 1 and 2.
The measure $\textrm{err}_\textrm{sol}$ is obtained via \verb|ode45| and shows a convergence rate a bit slower than $\mathcal{O}(1/M)$.
The slower convergence rate {can, in part, be }explained by the fact that a comparison is made with the \verb|ode45| solution.
Additional errors are incurred when comparing $\mathbf{s}$ with $\mathbf{\hat{s}}$, because the former is only available in the points $\tau_i$ and the latter is available only in points which are determined by \verb|ode45|.
\begin{table}[!ht]
	\centering
	\begin{tabular}{l|lll}
		$M$                         & 5       & 50      & 500           \\ \hline
		 \hline
		$\textrm{err}_{\textrm{o}}$ & 2.994e-03   & 2.730e-05    & 2.708e-07   \\
		$\textrm{err}_{\textrm{V}}$ & 2.520e-16   & 4.122e-15 & 1.237e-13   \\
		$\textrm{err}_{\textrm{W}}$ & 1.203e-17   & 1.814e-17 & 1.008e-17 \\
		$\textrm{err}_\textrm{sol}$  &  3.476e-01 &  4.447e-02 & 5.145e-03
	\end{tabular}
	\caption{Error measures for Experiment 3.}
	\label{table:exC_KrylovErrors}
\end{table}

\subsection{{Tensor Train approximations}} \label{sec:TT_ranks}
As briefly mentioned in the Introduction, despite the fact that the block matrix and the tensor formulation of the problem \eqref{eq:ODE} are mathematically equivalent, the tensor formulation introduced and analyzed in this work, allows the exploitation of particular low parametric representations{. The aim} of this section is indeed to show that for all the examples previously presented, the resulting tensors can be accurately and conveniently approximated using a low parametric representation called Tensor Train (TT) format \cite{MR2837533,MR2566459}.

As a matter of fact, multilinear algebra, tensor analysis, and the theory of tensor approximations play increasingly important roles in nowadays computational mathematics and numerical analysis{, thereby} attracting tremendous interest in recent years \cite{cichocki2013tensor}.
In this panorama, Tensor-Train (TT) approximations {are a powerful technique} for dealing with \textit{the curse of dimensionality}, i.e., {the particularly unpleasant feature where the number of unknowns and the computational complexity grow exponentially when the dimension of the problem increases}.

Before presenting the computational results, we briefly survey the main features of the TT representation, addressing the interested reader to the surveys \cite{cichocki2013tensor,cichocki2016low}. 
We consider the Tensor Train (TT) format \cite{MR2837533}  for the  tensors of interest in this work. Specifically, a $4$-mode $\boldsymbol{\mathcal{A}} \in \mathbb{C}^{N_1 \times N_2 \times M \times M }$ tensor is expressed in TT format when
\begin{equation*}\label{eq:TT_format_4}
\boldsymbol{\mathcal{A}}_{i_1,i_2,i_3,i_4}=G_1(i_1)G_2(i_2)G_3(i_3)G_4(i_4)
\end{equation*}
where $G_k(i_k)$ is a matrix of dimension $r_{k-1}\times r_k$ and $r_0=r_4=1$. 
The numbers $r_k$ are called \emph{TT-ranks}, and $G_k(i_k)$ are the \emph{cores} of the TT-decomposition. If $r_k \leq r$, $n_k \leq n$, then storing the TT-representation requires memorizing $ \leq 4nr^2$ numbers. If $r$ is small, then the memory requirement is much smaller than storing the full tensor, i.e, storing $n^4$ numbers. 

It is important to note that the TT representation allows to approximate various tensor operations efficiently, see, e.g., \cite[Sec. 4]{MR2837533}.
In this paper, we do not propose a low parametric TT version of Algorithm \ref{alg:Lanczos}. To be efficient, such a TT version would need a TT representation of the tensor products used in  Algorithm \ref{alg:Lanczos}. This paper aims to show that Algorithm \ref{alg:Lanczos} works; further enhancements are postponed to future investigations.

In Tables \ref{table:expA} - \ref{table:expC} we present the TT ranks for all the tensors considered in Section \ref{sec:exp_NMR}. In particular, the tables present the details of the TT approximations obtained using the \texttt{TT-toolbox}  \cite{MR2837533} when the required accuracy for the approximation is set to \texttt{$1e-5$} and \texttt{$1e-10$}.
As becomes evident from the presented results, all the considered tensors are amenable of a low parametric representation provided by the TT format and, indeed, for all the presented results the Compression Factor (C.F.), which is defined as $(\sum_{k=1}^{4} r_{k-1} \times n_k \times r_{k})/nnz(\boldsymbol{\mathcal{A}})$, with $nnz(\boldsymbol{\mathcal{A}})$ the number of nonzero elements of $\boldsymbol{\mathcal{A}}$, lies in the interval $(1e-3, 0.5 )$. It is important to observe that when increasing the accuracy from \texttt{$1e-5$} to \texttt{$1e-10$} the C.F. does not significantly change, suggesting the interesting fact  that for the considered tensors, the TT format is closer to an \textit{exact representation} rather than an \textit{approximation}.
Finally, it is important to note that the ranks of the TT approximations are robust across the choices of the parameter $\omega$ (cfr. the TT ranks in  Table \ref{table:expA}  and in  Table \ref{table:expB})
and to note also that, for some of the considered problems, the number of parameters needed for the \textit{approximation} can be two orders of magnitude smaller than $nnz(\boldsymbol{\mathcal{A}})$; {see Tables \ref{table:expB} and \ref{table:expC}}.
\begin{table} [ht!]
	\centering
	\caption{Experiment 1}
\begin{tabular}{lllll}\label{table:expA}
	$M$& $nnz(\boldsymbol{\mathcal{A}})$ & TT Ranks & $\sum_{k=1}^{4} r_{k-1} \times n_k \times r_{k}  $ & C.F. \\ 
	\hline
	\hline	 
	& & $\nu=1e+4,\; tol=1e-5$ & & \\
	\hline
	\hline	
	500 & 2004000 & 1   16    2  498    1 & 747768 & 0.37314 \\ 
	1000 & 8008000 & 1   16    2  900    1 & 2700768 & 0.33726 \\ 
	1500 & 18012000 & 1    16     2  1476     1 & 6642768 & 0.3688 \\ 
		\hline
	\hline 
	& & $\nu=1e+4,\; tol=1e-10$ & & \\
	\hline
    \hline
	500 & 2004000 & 1   16    2  500    1 & 750768 & 0.37463 \\ 
	1000 & 8008000 & 1   16    2  998    1 & 2994768 & 0.37397 \\ 
	1500 & 18012000 & 1    16     2  1500     1 & 6750768 & 0.37479 \\ 
	\hline 
	\hline 
	& & $\nu=1e+1,\; tol=1e-10$ & & \\
	\hline
	\hline
	500 & 2004000 & 1   16    2  500    1 & 750768 & 0.37463 \\ 
	1000 & 8008000 & 1    16     2  1000     1 & 3000768 & 0.37472 \\ 
	1500 & 18012000 & 1    16     2  1500     1 & 6750768 & 0.37479 \\
	\hline
	\hline		
\end{tabular}
\end{table}

\begin{table} [ht!]
	\centering
	\caption{Experiment 2}
	\begin{tabular}{lllll}\label{table:expB}
		$M$ & $nnz(\boldsymbol{\mathcal{A}})$ & TT Ranks & $\sum_{k=1}^{4} r_{k-1} \times n_k \times r_{k}  $ & C.F. \\ 
		\hline
	    \hline		 
		& & $\nu=1e+4,\; tol=1e-5$ & & \\
		\hline
		\hline	
		500 & 8016000 & 1   15    1  498    1 & 498480 & 0.062186 \\ 
		1000 & 29627200 & 1   15    1  900    1 & 1800480 & 0.060771 \\ 
		1500 & 72048000 & 1    15     1  1476     1 & 4428480 & 0.061466 \\ 
		
		\hline
		\hline	 
		& & $\nu=1e+4,\; tol=1e-10$ & & \\
		\hline
		\hline	
		500 & 8016000 & 1   16    2  500    1 & 750768 & 0.093659 \\ 
		1000 & 29627200 & 1   16    2  997    1 & 2991768 & 0.10098 \\ 
		1500 & 72048000 & 1    16     2  1500     1 & 6750768 & 0.093698 \\
		\hline
		\hline	 
		& & $\nu=1e+1,\; tol=1e-10$ & & \\
		\hline
		\hline
		500 & 8016000 & 1   16    2  500    1 & 750768 & 0.093659 \\ 
		1000 & 32032000 & 1    16     2  1000     1 & 3000768 & 0.09368 \\ 
		1500 & 72048000 & 1    16     2  1500     1 & 6750768 & 0.093698 \\ 
		\hline
		\hline	
	\end{tabular}
\end{table}

\begin{table} [ht!]
	\centering
	\caption{Experiment 3}
	\begin{tabular}{lllll}\label{table:expC}
		$M$& $nnz(\boldsymbol{\mathcal{A}})$ & TT Ranks & $\sum_{k=1}^{4} r_{k-1} \times n_k \times r_{k}  $ & C.F. \\ 
		\hline
		\hline	 
		&  & $\nu=1e+4,\; tol=1e-5$ & & \\
		\hline
		\hline	
		500 & 10020000 & 1   12    2  499    1 & 749076 & 0.074758 \\ 
		1000 & 40040000 & 1   12    2  996    1 & 2988576 & 0.07464 \\ 
		1500 & 90060000 & 1    12     2  1493     1 & 6719076 & 0.074607 \\

		\hline
		\hline	 
		& & $\nu=1e+4,\; tol=1e-10$ & & \\
		\hline
		\hline	
		500 & 10020000 & 1   16    2  500    1 & 750768 & 0.074927 \\ 
		1000 & 40040000 & 1    16     2  1000     1 & 3000768 & 0.074944 \\ 
		1500 & 90060000 & 1    16     2  1500     1 & 6750768 & 0.074959 \\ 
		\hline
		\hline	 
	\end{tabular}
\end{table}

\section{Conclusions}\label{sec:conclusions}
In this work we introduced a non-Hermitian Lanczos algorithm for tensors
and we provided the corresponding theoretical analysis. In particular, after introducing all the necessary theoretical background, we are able to interpret such Lanczos-type process in terms of tensor polynomials and to prove the related matching moment property. 
A series of numerical experiments performed on real world problems confirm the effectiveness of our approach. Using a linearly converging approximation for the inputs, the algorithm produces a linearly converging approximation of the bilinear form  $\mathbf{w}^H \boldsymbol{\mathsf{U}}(t) \mathbf{v}$, where  $\boldsymbol{\mathsf{U}}(t)$ is the solution of the ODE \eqref{eq:ODE}. More accurate approximation schemes for the inputs are currently being developed by some of the paper's authors, possibly leading to a faster convergence.
Moreover, in all the considered examples, the related tensors show a low parametric structure in terms of Tensor Train representation. This important feature paves the path for future efficiency improvements of our proposal where {this} representation is fully exploited in the Lanczos-type procedure.

\section*{Acknowledgements} 
{The authors want to thank Enik\"o Balig\'acs and Christian Bonhomme (Laboratoire de chimie de la matière condensée de Paris, Sorbonne University) for providing the data from real world applications used in Section \ref{sec:ex}. This work was supported by Charles University Research programs No. PRIMUS/21/SCI/009 and UNCE/SCI/023, and by the Magica project ANR-20-CE29-0007 funded by the French National Research Agency. The authors S.P. and M.R.-Z. are members of the GNCS-INdAM group.}

\newpage 

\appendix

\section{Appendix} \label{appendix}

\subsection{The $\ast$-product associativeness}\label{app:assoc}
In the following, we prove the three statments of Lemma \ref{lemma:properties_defintions1}.
	First, we have
	\begin{equation*}
	\begin{split}
	& ((\boldsymbol{\mathcal{A}}*\boldsymbol{\mathcal{A}})*{A})_{i_1,:,:}=\sum_{k=1}^{N_1} (\boldsymbol{\mathcal{A}}*\boldsymbol{\mathcal{A}})_{i_1,k,:,:}A_{k,:,:}=\sum_{k=1}^{N_1}(\sum_{j=1}^{N_1} \boldsymbol{\mathcal{A}}_{i_1,j,:,:}\boldsymbol{\mathcal{A}}_{j,k,:,:})A_{k,:,:}= \\
	& \sum_{j=1}^{N_1} \boldsymbol{\mathcal{A}}_{i_1,j,:,:}(\sum_{k=1}^{N_1}\boldsymbol{\mathcal{A}}_{j,k,:,:}A_{k,:,:})=\sum_{j=1}^{N_1} \boldsymbol{\mathcal{A}}_{i_1,j,:,:}(\boldsymbol{\mathcal{A}}*A)_{j,:,:}=(\boldsymbol{\mathcal{A}}*(\boldsymbol{\mathcal{A}}*A))_{i_1,:,:}.
	\end{split} 
	\end{equation*}
	The second statement is proved by direct inspection as follows:
	\begin{equation*}
	\begin{split}
	& ((B^D * \boldsymbol{\mathcal{A}})^D *A)_{:,:}= \sum_{k=1}^{N_2}(B^D * \boldsymbol{\mathcal{A}})^D_{k,:,:}A_{k,:,:} = 
	\sum_{k=1}^{N_2}(\sum_{j=1}^{N_1} B^D_{j,:,:} \boldsymbol{\mathcal{A}}_{j,k,:,:})A_{k,:,:}=\\
	&	\sum_{j=1}^{N_1} B^D_{j,:,:}(\sum_{k=1}^{N_2} \boldsymbol{\mathcal{A}}_{j,k,:,:}A_{k,:,:})=\sum_{j=1}^{N_1} B^D_{j,:,:}(\mathcal{A}*A)_{j,:,:} = B^D * (\boldsymbol{\mathcal{A}} *A).
	\end{split}
	\end{equation*}
	Finally, we have the following equality
	\begin{equation*}
	\begin{split}
	&((\boldsymbol{\mathcal{C}}*\boldsymbol{\mathcal{A}})*\boldsymbol{\mathcal{B}})_{i_1,i_2,:,:}=\sum_{k=1}^{N_2}(\boldsymbol{\mathcal{C}}*\boldsymbol{\mathcal{A}})_{i_1,k,:,:}\boldsymbol{\mathcal{B}}_{k,i_2,:,:} = \sum_{k=1}^{N_2}(\sum_{j=1}^{N_1}\boldsymbol{\mathcal{C}}_{i_1,j,:,:}\boldsymbol{\mathcal{A}}_{j,k,:,:})\boldsymbol{\mathcal{B}}_{k,i_2,:,:}=\\
	&\sum_{j=1}^{N_1}\boldsymbol{\mathcal{C}}_{i_1,j,:,:}(\sum_{k=1}^{N_2}\boldsymbol{\mathcal{A}}_{j,k,:,:}\boldsymbol{\mathcal{B}}_{k,i_2,:,:})=\sum_{j=1}^{N_1}\boldsymbol{\mathcal{C}}_{i_1,j,:,:}(\boldsymbol{\mathcal{A}}*\boldsymbol{\mathcal{B}})_{j,i_2,:,:}=(\boldsymbol{\mathcal{C}}*(\boldsymbol{\mathcal{A}}*\boldsymbol{\mathcal{B}}))_{i_1,i_2,:,:}.
	\end{split}
	\end{equation*}
	This concludes the Lemma proof.

\subsection{Matching moment property}\label{appendix:mmp}
We prove Theorem \ref{thm:mmp} by giving a polynomial interpretation of the Lanczos process for tensors. The proof follows the same principles as the proof given in \cite{GisPoz22} for a different but analogous case.
For simplicity, we consider the case in which the matrices $\boldsymbol{\gamma}_k$ are set equal to the matrix identity. The proof can be easily extended to the general case.
Let us define the following set of polynomials 
\begin{equation*}
    \mathcal{P}_* := \left\{ p(\lambda) = \sum_{{k}=0}^{{\ell}} \lambda^{{k_*}}\times \boldsymbol{\eta}_{{k}} \right\},
\end{equation*}
with $\boldsymbol{\eta}_{{k}} \in \mathbb{C}^{M \times M}$.
We say that the map $[\cdot, \cdot]: \mathcal{P}_* \times \mathcal{P}_* \rightarrow \mathbb{C}^{M\times M}$ is a \emph{sesquilinear block form} if and only if, given $p,q,r,s \in \mathcal{P}_\ast$ and $\boldsymbol{\alpha}, \boldsymbol{\beta} \in \mathbb{C}^{M \times M}$, it satisfies 
\begin{align*}
    [q \times \boldsymbol{\alpha}, p \times \boldsymbol{\beta}] &= \boldsymbol{\alpha}^H \times[q,p] \times \boldsymbol{\beta}, \\
    [q + s, p + r] &= [q, p] + [s, p] + [q, r] + [s, r].
\end{align*}
In addition, from now on we assume that for every sesquilinear block form $[\cdot, \cdot]$ it holds that
\begin{equation}\label{eq:bfor:prop}
    [\lambda*q,p] = [q, \lambda*p].
\end{equation}
Then, $[\cdot,\cdot]$ is determined by its \emph{moments} defined as
\begin{equation*}
    \boldsymbol{\mu}_{{k}}:= [\lambda^{{{k_*}}},1] = [1, \lambda^{{{k}_*}}], \quad {k}=0,1,\dots \,. 
\end{equation*}

We say that the sequences $p_0, p_1, \dots$ and $q_0, q_1, \dots$ from $\mathcal{P}_*$ are sequences of \emph{biorthonormal polynomimals} with respect to $[\cdot,\cdot]$ if and only if 
\begin{equation}\label{eq:p:orth:cond}
    [q_i, p_j] = \delta_{ij} I_M,
\end{equation}
with $\delta_{ij}$ the Kronecker delta (hereafter the subindex ${k}$ in $p_{{k}}$ and $q_{{k}}$ will stand for the degree of the polynomial).
From now on, we also assume $m_0 = I_M$, getting $p_0 = q_0 = I_M$.
Let $q_1$ be the polynomial from $\mathcal{P}_*$
\begin{equation*}
   q_1(\lambda) = \lambda * q_0(\lambda) - q_0(\lambda)\times \boldsymbol{\alpha}_0^H,
\end{equation*}
the orthogonality conditions \eqref{eq:p:orth:cond} imply
\begin{equation*}
    \boldsymbol{\alpha}_0 = [\lambda*q_0, p_0].
\end{equation*}
{Analogously}, we get
\begin{equation*}
   p_1(\lambda) \times \boldsymbol{\beta}_1 = \lambda * p_0(\lambda) - p_0(\lambda) \times \boldsymbol{\alpha}_0,
\end{equation*}
with
\begin{equation*}
    \boldsymbol{\alpha}_0 = [q_0, \lambda*p_0], \quad  \boldsymbol{\beta}_1 = [q_1, \lambda*p_0].
\end{equation*}
Repeating such a biorthogonalization process, we obtain the three-term recurrences for $k=0,1,\dots$ 
\begin{subequations}\label{eq:p:rec}
\begin{align}
     q_{k+1}(\lambda) &= \lambda * q_{k}(\lambda) - q_{k}(\lambda)\times \boldsymbol{\alpha}_{k}^H - q_{k-1}(\lambda)\times \boldsymbol{\beta}_{k}^H \label{eq:p:rec:1} \\
     p_{k+1}(\lambda)\times\boldsymbol{\beta}_{k+1} &= \lambda * p_{k}(\lambda) - p_{k}(\lambda)\times\boldsymbol{\alpha}_{k} - p_{k-1}(\lambda),
     \label{eq:p:rec:2}
\end{align}
\end{subequations}
with $p_{-1} = q_{-1} = 0$ and
\begin{equation}\label{eq:p:coeff}
    \boldsymbol{\alpha}_{k} = [q_{k}, \lambda * p_{k}],
    \quad
    \boldsymbol{\beta}_{k+1} = [q_{k+1}, \lambda * p_{k}].
\end{equation}
We remark that the recurrences are obtained using property \eqref{eq:bfor:prop}.
The previous derivation also constructively proves that the biorthonormal polynomials $p_0, \dots, p_n$ and $q_0, \dots, q_n$ exist if $\boldsymbol{\beta}_1, \dots, \boldsymbol{\beta}_n$ are invertible matrices.

Let $\boldsymbol{\mathcal{A}}, V, W$ be as in Theorem \ref{thm:mmp} and let us defined the sesquilinear block form 
\begin{equation*}
    [q, p]_{\boldsymbol{\mathcal{A}}} = W^D * q^D(\boldsymbol{\mathcal{A}}) * p(\boldsymbol{\mathcal{A}}) * V.
\end{equation*}
Assume that there exist polynomials $p_0, \dots, p_n$ and $q_0,\dots,q_n$ from $\mathcal{P}_*$ which are biorthonormal with respect to $[\cdot, \cdot]_{\boldsymbol{\mathcal{A}}}$.
Defining the vectors
{\begin{equation*}
    V_k = p_{k-1}(\boldsymbol{\mathcal{A}})*V, \qquad W^D_k = W^D * q_{k-1}^D(\boldsymbol{\mathcal{A}}), 
\end{equation*}}
and using the recurrences \eqref{eq:p:rec} we get the recurrences \eqref{eq:3r} of the non-Hermitian Lanczos for tensors.
Moreover, the coefficients in \eqref{eq:p:coeff} are the coefficients in \eqref{eq:3r:coeff}.

Let $\boldsymbol{\mathcal{T}}_n$ be as in Theorem \ref{thm:mmp}.
We can define the sesquilinear block form $[q, p]_n: \mathcal{P}_* \times \mathcal{P}_* \rightarrow \mathbb{C}^{M \times M}$ as
\begin{equation*}
    [q, p]_n = E_1^D * q^D(\boldsymbol{\mathcal{T}}_n)* p(\boldsymbol{\mathcal{T}}_n)* E_1.
\end{equation*}
Note that here the vector $\mathbf{e}_1$ in the definition of $E_1 = \mathsf{e}_1 \otimes I_M$ has length $n \leq N$.
The following Lemmas will show that
\begin{equation*}
   \boldsymbol{\mu}_{{k}} = [\lambda^{*{k}},1]_{\boldsymbol{\mathcal{A}}} = [\lambda^{*{k}},1]_n, \quad {k}=0,\dots,2n-1,
\end{equation*}
concluding the proof of Theorem \ref{thm:mmp}.

\begin{lemma}\label{lemma:op:basis}
   Let $p_0,\dots,p_n \in \mathcal{P}_*$ and $q_0,\dots,q_n \in \mathcal{P}_*$ be biorthonormal polynomials with respect to $[\cdot,\cdot]_{\boldsymbol{\mathcal{A}}}$.
   Assume that $\boldsymbol{\beta}_1,\dots, \boldsymbol{\beta}_n$ in \eqref{eq:p:rec} are invertible matrices.
   Then the polynomials are also biorthonormal with respect to $[\cdot,\cdot]_n$ ad defined above. 
\end{lemma}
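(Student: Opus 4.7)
The strategy is to show that evaluation at the Jacobi tensor $\boldsymbol{\mathcal{T}}_n$ sends the biorthonormal polynomials onto the canonical hypervector basis of $\mathbb{C}^{n\times M\times M}$. Setting $\mathcal{E}_k := \mathbf{e}_k\otimes I_M$ (so that $E_1=\mathcal{E}_1$), the plan is to prove
\begin{equation*}
p_k(\boldsymbol{\mathcal{T}}_n)\ast E_1 = \mathcal{E}_{k+1}, \qquad E_1^D \ast q_k^D(\boldsymbol{\mathcal{T}}_n) = \mathcal{E}_{k+1}^D, \qquad k=0,\dots,n-1,
\end{equation*}
whereupon the biorthonormality statement $[q_k,p_j]_n=\delta_{kj}I_M$ reduces, via the associativity of $\ast$ granted by Lemma \ref{lemma:properties_defintions1}, to the trivial identity $\mathcal{E}_{k+1}^D\ast\mathcal{E}_{j+1}=\delta_{kj}I_M$, which is immediate from the definition of the hypervector inner product.

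The first ingredient is to read off the tridiagonal action of $\boldsymbol{\mathcal{T}}_n$ on the canonical hypervectors. Using \eqref{eq:Tcoeff} with $\boldsymbol{\gamma}_k=I_M$ together with Definition \ref{def:THV}, direct inspection yields
\begin{equation*}
\boldsymbol{\mathcal{T}}_n\ast\mathcal{E}_{k+1} = \mathcal{E}_k + \mathcal{E}_{k+1}\times\boldsymbol{\alpha}_k + \mathcal{E}_{k+2}\times\boldsymbol{\beta}_{k+1},
\end{equation*}
and the dual identity $\mathcal{E}_{k+1}^D\ast\boldsymbol{\mathcal{T}}_n = \mathcal{E}_k^D\times\boldsymbol{\beta}_k + \mathcal{E}_{k+1}^D\times\boldsymbol{\alpha}_k + \mathcal{E}_{k+2}^D$, with the boundary convention $\mathcal{E}_0=\mathcal{E}_{n+1}=0$. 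These two identities mirror exactly the three-term polynomial recurrences \eqref{eq:p:rec}.

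The second ingredient is a twin induction on $k$. The base case is immediate: since $p_0=q_0=I_M$, one has $p_0(\boldsymbol{\mathcal{T}}_n)\ast E_1=\mathcal{E}_1$ and $E_1^D\ast q_0^D(\boldsymbol{\mathcal{T}}_n)=\mathcal{E}_1^D$. Inductive step for $p$: substitute $\boldsymbol{\mathcal{T}}_n$ for $\lambda$ in \eqref{eq:p:rec:2}, $\ast$-multiply by $E_1$ from the right, and apply the inductive hypothesis together with the action formula; the $\mathcal{E}_k$ and $\mathcal{E}_{k+1}\times\boldsymbol{\alpha}_k$ contributions cancel, leaving $(p_{k+1}(\boldsymbol{\mathcal{T}}_n)\ast E_1)\times\boldsymbol{\beta}_{k+1}=\mathcal{E}_{k+2}\times\boldsymbol{\beta}_{k+1}$, which closes under the invertibility hypothesis on $\boldsymbol{\beta}_{k+1}$. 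Inductive step for $q$: first derive the dual recurrence $q_{k+1}^D = q_k^D\ast\lambda - \boldsymbol{\alpha}_k\times q_k^D - \boldsymbol{\beta}_k\times q_{k-1}^D$ by applying $(\cdot)^D$ to \eqref{eq:p:rec:1} and using the transposition rules $(\lambda\ast q)^D = q^D\ast\lambda$ and $(q\times\boldsymbol{\alpha}^H)^D = \boldsymbol{\alpha}\times q^D$; then induct analogously, exploiting the fact that $E_1^D\ast(\boldsymbol{\alpha}\times Q)=\boldsymbol{\alpha}\times(E_1^D\ast Q)$ because $E_1^D$'s only nonzero block is $I_M$, so scalar matrix factors commute through.

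The main obstacle is the careful bookkeeping between the polynomial indexing of $\boldsymbol{\alpha}_k,\boldsymbol{\beta}_k$ in \eqref{eq:p:rec} and the Lanczos-iteration indexing used in the construction \eqref{eq:Tcoeff} of $\boldsymbol{\mathcal{T}}_n$ (which differ by one), together with correctly propagating the Hermitian transposes through the $(\cdot)^D$ map in the derivation of the dual recurrence. A minor point is that the induction terminates at $k=n-1$: the boundary truncation of $\boldsymbol{\mathcal{T}}_n$ forces $p_n(\boldsymbol{\mathcal{T}}_n)\ast E_1$ and $E_1^D\ast q_n^D(\boldsymbol{\mathcal{T}}_n)$ to vanish, which is consistent with $\boldsymbol{\mathcal{T}}_n$ encoding exactly $n$ biorthonormal polynomials and is precisely what is needed in the downstream matching-moment argument.
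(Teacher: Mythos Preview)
Your proposal is correct and follows essentially the same approach as the paper's proof: establish by induction that $p_k(\boldsymbol{\mathcal{T}}_n)\ast E_1 = E_{k+1}$ and $E_1^D\ast q_k^D(\boldsymbol{\mathcal{T}}_n)=E_{k+1}^D$ for $k=0,\dots,n-1$, then read off $[q_i,p_j]_n=E_{i+1}^D\ast E_{j+1}=\delta_{ij}I_M$. Your extra detail (the explicit tridiagonal action formula, the derivation of the dual recurrence, the remark on the off-by-one between the polynomial and Lanczos indexings, and the observation that the degree-$n$ evaluations vanish) is all correct and makes explicit what the paper leaves to ``direct computations''.
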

\begin{proof}
Let us define the tensors $E_i := \mathbf{e}_{i} \otimes I_M$ for $i=1,\dots, n$, with $I_n = [\mathsf{e}_1, \dots, \mathsf{e}_n]$.
 We will first prove by induction that for $i=0,\dots,n-1$ 
   \begin{equation}\label{eq:lem:op:basis:Ei}
      E_{i+1} = p_i(\boldsymbol{\mathcal{T}}_n) * E_1,
      \quad
      E_{i+1}^D = E_1^D * q_i^D(\boldsymbol{\mathcal{T}}_n).
  \end{equation}
  For $i=0$, the Eq.~\eqref{eq:lem:op:basis:Ei} are trivial. 
  Assume now that Eq.~\eqref{eq:lem:op:basis:Ei} hold for $i=1,\dots,k$,
  by \eqref{eq:p:rec} we get
  \begin{align*}
     E_1^D * q_{k+1}^D(\boldsymbol{\mathcal{T}}_n) &= E_{k+1}^D * \boldsymbol{\mathcal{T}}_n - \boldsymbol{\alpha}_{k} \times E_{k+1}^D - \boldsymbol{\beta}_{k} \times E_{k}^D,  \\
     p_{k+1}(\boldsymbol{\mathcal{T}}_n)\times\boldsymbol{\beta}_{k+1} &= \boldsymbol{\mathcal{T}}_n * E_{k+1} - E_{k+1}\times\boldsymbol{\alpha}_{k} - E_k.
\end{align*}
Since $\boldsymbol{\beta}_{k+1}$ is invertible, direct computations prove that \eqref{eq:lem:op:basis:Ei} holds $i=k+1$.

As a consequence we have
\begin{equation*}
    [{q}_i, {p}_j]_n = E_{i+1}^D \ast E_{j+1} = \delta_{ij} I_M,
\end{equation*}
which concludes the proof.
\end{proof}

\begin{lemma}
   Let $p_0,\dots,p_{n-1} \in \mathcal{P}_*$ and $q_0,\dots,q_{n-1} \in \mathcal{P}_*$ be biorthonormal polynomials with respect to a sesquilinear block form $[\cdot,\cdot]_{\boldsymbol{\mathcal{A}}}$
   and to a sesquilinear block form $[\cdot,\cdot]_{\boldsymbol{\mathcal{B}}}$.
   If $[1,1]_{\boldsymbol{\mathcal{A}}} = [1,1]_{\boldsymbol{\mathcal{B}}} = I_M$,
   then $[\lambda^{{k_*}},1]_{\boldsymbol{\mathcal{A}}} = [\lambda^{{k}_*},1]_{\boldsymbol{\mathcal{B}}}$ for ${k}=0,\dots,2n-1$.
\end{lemma}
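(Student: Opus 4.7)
The plan is to express the moments $\mu_k = [\lambda^{k_*}, 1]$ as polynomial functions of the three-term recurrence coefficients $\alpha_0, \ldots, \alpha_{n-1}$ and $\beta_1, \ldots, \beta_{n-1}$ from \eqref{eq:p:rec}, and then to observe that these coefficients are uniquely determined by the shared polynomials $p_0, \ldots, p_{n-1}$ and $q_0, \ldots, q_{n-1}$. Once this is achieved, the matching of the moments follows automatically from the matching of the coefficients.

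First, since both bilinear forms share the same biorthonormal polynomials, the recurrence coefficients in \eqref{eq:p:rec} computed for $[\cdot, \cdot]_{\boldsymbol{\mathcal{A}}}$ and for $[\cdot, \cdot]_{\boldsymbol{\mathcal{B}}}$ must coincide, because they can be read off directly from the monic structure of the recurrences \eqref{eq:p:rec:1}--\eqref{eq:p:rec:2} without reference to a specific form. Iterating the recurrences then yields, for each $i \leq n-1$, unique expansions
\begin{equation*}
\lambda^{i_*} = \sum_{m=0}^{i} q_m \times \gamma_{m,i} = \sum_{l=0}^{i} p_l \times \delta_{l,i},
\end{equation*}
with matrix coefficients $\gamma_{m,i}$ and $\delta_{l,i}$ that are explicit polynomial expressions in the shared $\alpha_k, \beta_k$, hence form-independent.

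The main computation is then to apply the property \eqref{eq:bfor:prop} to split, for any $k$ in the range $0 \leq k \leq 2n-2$, the moment as $\mu_k = [\lambda^{k_*}, 1] = [\lambda^{i_*}, \lambda^{j_*}]$ for some $i + j = k$ with $i, j \leq n-1$. Substituting the expansion of $\lambda^{i_*}$ in the $q$-basis and of $\lambda^{j_*}$ in the $p$-basis, and then using sesquilinearity together with the biorthonormality condition $[q_m, p_l] = \delta_{ml} I_M$, the resulting double sum collapses to
\begin{equation*}
\mu_k = \sum_{m=0}^{\min(i,j)} \gamma_{m,i}^H \times \delta_{m,j},
\end{equation*}
which is manifestly the same for both forms.

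The hardest part, which I expect to be the main obstacle, is the borderline case $k = 2n-1$, where no splitting $k = i + j$ with $i, j \leq n-1$ is possible. To handle it, I would extend the argument by introducing a virtual polynomial (say $p_n$ or $q_n$) through one extra application of the three-term recurrence, which brings in the \emph{a priori} form-dependent coefficient $\alpha_{n-1} = [q_{n-1}, \lambda\ast p_{n-1}]$. The key step is to verify, through a direct computation using biorthonormality and the pairing property \eqref{eq:bfor:prop}, that the form-dependent contributions involving $\alpha_{n-1}$ cancel exactly in the expression for $\mu_{2n-1}$, leaving only terms in the shared recurrence coefficients. This cancellation, which is the tensorial analogue of the classical matching of the $(2n-1)$-th moment in Gauss-type quadrature, is the technical linchpin of the argument and follows the lines of the analogous computation in \cite{GisPoz22}.
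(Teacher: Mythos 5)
Your treatment of the moments $\mu_k$ for $k \le 2n-2$ is sound, and is essentially a repackaging of the paper's inductive argument: the paper peels off the highest-order moment from the coefficient identities $\boldsymbol{\beta}_{j-1} = [q_{j-1},\lambda * p_{j-2}]$ and $\boldsymbol{\alpha}_{j-1} = [q_{j-1},\lambda * p_{j-1}]$ for $j = 2,\dots,n$, and the invertibility of the leading coefficients does there the same work that biorthonormality does in your double-sum collapse.

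The gap is in the $k=2n-1$ case, and it is the reverse of what you hope for: the $\boldsymbol{\alpha}_{n-1}$-dependence does \emph{not} cancel. Carrying out the computation you sketch --- expanding $\lambda^{n_*}$ in the $q$-basis (which brings in the virtual $q_n$, whose contribution vanishes by orthogonality to $p_0,\dots,p_{n-1}$) and $\lambda^{(n-1)_*}$ in the $p$-basis --- one finds that all the $\gamma$- and $\delta$-coefficients are expressions in the shared $\boldsymbol{\alpha}_0,\dots,\boldsymbol{\alpha}_{n-2}$, $\boldsymbol{\beta}_1,\dots,\boldsymbol{\beta}_{n-1}$ \emph{except} $\gamma_{n-1,n}$, which carries an additive $\boldsymbol{\alpha}_{n-1}^H$. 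The net result is
\begin{equation*}
\mu_{2n-1}^{\boldsymbol{\mathcal{A}}} - \mu_{2n-1}^{\boldsymbol{\mathcal{B}}}
= \left(\boldsymbol{\alpha}_{n-1}^{\boldsymbol{\mathcal{A}}} - \boldsymbol{\alpha}_{n-1}^{\boldsymbol{\mathcal{B}}}\right) \times \left(\boldsymbol{\beta}_{n-1}\cdots\boldsymbol{\beta}_1\right),
\end{equation*}
with the right factor invertible, so the top moment matches if and only if $\boldsymbol{\alpha}_{n-1}$ is form-independent --- which the stated hypotheses do not force. A scalar counterexample ($M=1$, $n=2$): the measures $\tfrac12(\delta_{-1}+\delta_1)$ and $\tfrac15\delta_{-2}+\tfrac45\delta_{1/2}$ share $\mu_0=1$, $\mu_1=0$, $\mu_2=1$ and therefore the same biorthonormal $p_0=q_0=1$, $p_1=q_1=\lambda$, yet have $\mu_3=0$ and $\mu_3=-3/2$ respectively. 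The paper's own proof, at the line ``repeating the same argument with the coefficient $\boldsymbol{\alpha}_{j-1}$'', tacitly uses that $\boldsymbol{\alpha}_{n-1}$ agrees for both forms; in the only place the lemma is invoked (proof of Theorem~\ref{thm:mmp}, with $[\cdot,\cdot]_{\boldsymbol{\mathcal{B}}}=[\cdot,\cdot]_n$) this is legitimate, since $\boldsymbol{\mathcal{T}}_n$ stores precisely the $\boldsymbol{\alpha}_{n-1}$ computed from $[\cdot,\cdot]_{\boldsymbol{\mathcal{A}}}$. The correct repair to your argument is therefore not to hunt for a cancellation but to add the hypothesis that $\boldsymbol{\alpha}_{n-1}$ (equivalently, the polynomials $p_n$, $q_n$) is also shared by the two forms.
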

\begin{proof}
  The proof is by induction. 
  Let $\boldsymbol{\mu}_{{k}} = [\lambda^{{{k}_*}},1]_{\boldsymbol{\mathcal{A}}}$ and $\widehat{\boldsymbol{\mu}}_j = [\lambda^{{{k}_*}},1]_{\boldsymbol{\mathcal{B}}} $ for ${k}=0,1,\dots, 2n-1$.
  The coefficient formula \eqref{eq:p:coeff} gives
  \begin{equation*}
      [q_{0}, \lambda*p_{0}]_{\boldsymbol{\mathcal{A}}} = \boldsymbol{\alpha}_0 = [q_{0}, \lambda*p_{0}]_{\boldsymbol{\mathcal{B}}}.
  \end{equation*}
  Hence $\boldsymbol{\mu}_1 = \boldsymbol{\alpha}_0 = \widehat{\boldsymbol{\mu}}_1$.
  Considering the induction assumptions $\boldsymbol{\mu}_{{k}} = \widehat{\boldsymbol{\mu}}_{{k}}$ for ${k}=0,\dots,2{j}-3$,
  we prove that $\boldsymbol{\mu}_{2{j}-2} = \widehat{\boldsymbol{\mu}}_{2{j}-2}$
  and $\boldsymbol{\mu}_{2{j}-1} = \widehat{\boldsymbol{\mu}}_{2{j}-1}$, for ${j} = 2,\dots,n$.
  By the formula in \eqref{eq:p:coeff} we get
  \begin{equation*}
      [q_{{j}-1}, \lambda*p_{{j}-2}]_{\boldsymbol{\mathcal{A}}} = \boldsymbol{\beta}_{{j}-1} = [q_{{j}-1}, \lambda*p_{{j}-2}]_{\boldsymbol{\mathcal{B}}},
  \end{equation*}
  which we can rewrite as
  \begin{equation*}
      \sum_{i=0}^{{j}-1}\sum_{{k}=0}^{{j}-2} \boldsymbol{\eta}_i^H \times \boldsymbol{\mu}_{i+{k}+1} \times \widehat{\boldsymbol{\eta}}_{{k}} = \sum_{i=0}^{{j}-1}\sum_{{k}=0}^{{j}-2} \boldsymbol{\eta}_i^H \times \widehat{\boldsymbol{\mu}}_{i+{k}+1} \times \widehat{\boldsymbol{\eta}}_{{k}},
  \end{equation*}
  where $\boldsymbol{\eta}_i, \widehat{\boldsymbol{\eta}}_{{k}} \in \mathbb{C}^{M \times M}$ are the coefficients respectively of $q_{{j}-1}$ and $p_{{j}-2}$.
  By the induction assumption we obtain
    \begin{equation*}
       \boldsymbol{\eta}_{{j}-1}^H \times \boldsymbol{\mu}_{2{j}-2} \times \widehat{\boldsymbol{\eta}}_{{j}-2} = \boldsymbol{\eta}_{{j}-1}^H \times \widehat{\boldsymbol{\mu}}_{2{j}-2} \times \widehat{\boldsymbol{\eta}}_{{j}-2}.
  \end{equation*}
  The leading coefficients of the polynomials $q_{2{j}-2}$ and $p_{2{j}-2}$ are respectively $\boldsymbol{\eta}_{{j}-1}= 1$ and $\widehat{\boldsymbol{\eta}}_{{j}-2}=(\boldsymbol{\beta}_{{j}-2} \times \cdots \times \boldsymbol{\beta}_{1})^{-1}$. Hence $\boldsymbol{\mu}_{2{j}-2} = \widehat{\boldsymbol{\mu}}_{2{j}-2}$.
  We conclude the proof repeating the same argument with the coefficient $\boldsymbol{\alpha}_{{j}-1}$ \eqref{eq:p:coeff}.
\end{proof}

\section*{Data Availability Statement}
Data sharing not applicable to this article as no datasets were generated or analyzed during the current study.

\section*{Conflict of Interest}
The authors declare that they have no conflict of interest

\bibliographystyle{abbrv}
\bibliography{bibliography}

\end{document}